 \definecolor{cites}{rgb}{0.75 , 0.00 , 0.00}  
 \definecolor{urls} {rgb}{0.00 , 0.00 , 1.00}  
 \definecolor{links}{rgb}{0.00 , 0.00 , 0.5}   
 \definecolor{gray}{rgb}{0.5,0.5,.5}
\newcommand{\C}{\mathbb{C}}
\newcommand{\N}{\mathbb{N}}
\newcommand{\T}{\mathbb{T}}
\newcommand{\Z}{\mathbb{Z}}
\newcommand{\Dc}{\mathcal{D}}
\renewcommand{\epsilon}{\varepsilon}
\DeclareMathOperator{\dist}{dist}
\DeclareMathOperator{\spann}{span}
\DeclareMathOperator*{\esssup}{ess-sup}
\newcommand{\from}{\colon}
\providecommand{\scpr}[2]{\left\langle #1, #2 \right\rangle}
\renewcommand{\sp}{\scpr}
\providecommand{\abs}[1]{\left\lvert#1\right\rvert}
\providecommand{\norm}[1]{\left\lVert#1\right\rVert}
\providecommand{\set}[1]{\left\{ #1\right\}}
\newtheorem{thm}{Theorem}
\newtheorem{lem}[thm]{Lemma}
\newtheorem{prop}[thm]{Proposition}
\newtheorem{cor}[thm]{Corollary}
\theoremstyle{definition}
\theoremstyle{remark}
\newtheorem{rem}[thm]{Remark}
\title{Bounded and compact Toeplitz+Hankel matrices}
\author{T. Ehrhardt\footnote{Mathematics Department, University of California, Santa Cruz, California, CA-95064, USA, \texttt{tehrhard@ucsc.edu}}\ , R. Hagger\footnote{Department of Mathematics, University of Reading, Reading RG6 6AX, United Kingdom, \texttt{r.t.hagger@reading.ac.uk}}\ ~and J.A. Virtanen\footnote{Department of Mathematics, University of Reading, Reading RG6 6AX, United Kingdom, \texttt{j.a.virtanen@reading.ac.uk} and Department of Mathematics, University of Helsinki, Helsinki 00014, Finland, \texttt{jani.virtanen@helsinki.fi}}}
\date{\today}
\begin{document}

\maketitle

\makeatletter
\def\blfootnote{\gdef\@thefnmark{}\@footnotetext}
\makeatother

\abstract{We show that an infinite Toeplitz+Hankel matrix $T(\varphi) + H(\psi)$ generates a bounded (compact) operator on $\ell^p(\N_0)$ with $1\leq p\leq \infty$ if and only if both $T(\varphi)$ and $H(\psi)$ are bounded (compact). We also give analogous characterizations for Toeplitz+Hankel operators acting on the reflexive Hardy spaces. In both cases, we provide an intrinsic characterization of bounded operators of Toeplitz+Hankel form similar to the Brown-Halmos theorem. In addition, we establish estimates for the norm and the essential norm of such operators.

\medskip

\noindent\textit{2020 Mathematics Subject Classification:} Primary 47B35; Secondary 47B37, 30H10.

\medskip

\noindent\textit{Keywords:} Toeplitz+Hankel matrices, boundedness, compactness, sequence spaces, Hardy spaces.

\medskip

\noindent\textit{Acknowledgements:} This project has received funding from the European Union's Horizon 2020 research and innovation programme under the Marie Sklodowska-Curie grant agreement No 844451. Virtanen was supported in part by EPSRC grant EP/T008636/1. Ehrhardt was supported in part by the Simons Foundation Collaboration Grant \# 525111. The authors also thank the American Institute of Mathematics and the SQuaRE program for their support.

\medskip
}

\section{Introduction}

In 1911, in a footnote of~\cite{Toeplitz}, Otto Toeplitz proved that the infinite matrix of the form $(\varphi_{j-k})_{j,k \in \N_0}$ generates a bounded operator\footnote{More precisely, Toeplitz showed that the finite sections of the matrix $(\varphi_{j-k})_{j,k\geq 0}$ are uniformly bounded if and only if the same is true for the Laurent matrix $(\varphi_{j-k})_{j,k\in\Z}$, which by the Banach-Steinhaus theorem characterizes boundedness.} on $\ell^2(\N_0)$ if and only if there is a function $a\in L^\infty$ of the unit circle $\T$ such that $\hat{a}_j = \varphi_j$ for all $j\in\Z$, where $\hat{a}_j$ is the $j$-th Fourier coefficient of the function $a$. No one could have predicted the enormous impact that these and related seemingly simple matrices have had in mathematics and its applications for the past five decades, see e.g.~\cite{BoeSi2006, DIK2011, DIK2013, Widom} and the references therein. Indeed, about 50 years after Toeplitz' result, the same characterization was given independently by Brown and Halmos~\cite{BH, Halmos}, who viewed Toeplitz matrices $T(a) = (\hat{a}_{j-k})_{j,k\geq 0}$ as operators acting on the Hardy space $H^2$ of the unit circle $\T$ by the rule
$$
	T(a)f = PM(a)f = P(af),
$$
where $P$ is the orthogonal (Riesz) projection of $L^2$ onto $H^2$ and $M(a)$ is the operator of multiplication by $a$. Notice that for $j,k \geq 0$, we have
$$
	\langle T(a) \chi_k, \chi_j \rangle = \langle a, \chi_{j-k}\rangle = {\hat{a}}_{j-k},
$$
where $\{\chi_n : n\in\N_0\}$ is the standard orthonormal basis of $H^2$. More precisely, Brown and Halmos showed that if $A$ is a bounded linear operator on $H^2$ and if there are $\varphi_n\in\C$ such that $\langle A\chi_k, \chi_j \rangle = \varphi_{j-k}$ for all $j,k\geq 0$, then there is an $a\in L^\infty$ such that $A = T(a)$ and $\hat{a}_n = \varphi_n$ for all $n\in\Z$. Moreover, $\|T(a)\| = \|a\|_\infty$. 

Since the result of Brown and Halmos, the study of Toeplitz operators has been extended to the other Hardy spaces
$$
	H^p = \{ f\in L^p : \hat{f}_n = 0\ {\rm for}\ n<0\},
$$
sequence spaces $\ell^p(\N_0)$, and various other (analytic) function spaces. Indeed, for $1<p<\infty$, Brown and Halmos' result for $T(a)$ acting on the reflexive Hardy spaces $H^p$ takes the same form as in the case $p=2$ with the norm estimate
\begin{equation}\label{e:Toeplitz norm}
	\|a\|_\infty \leq \|T(a)\| \leq \|P\|_{B(L^p)} \|a\|_\infty,
\end{equation}
where
\begin{equation}\label{e:c_p}
	c_p := \|P\|_{B(L^p)} = \frac1{\sin \frac\pi{p}}
\end{equation} 
(see~\cite[Theorem~2.7]{BoeSi2006} for the norm estimate and~\cite{HoVe} for the exact value of $c_p$). In particular, $T(a)$ is bounded on $H^p$ if and only if $a\in L^\infty$. For the boundedness of Toeplitz operators on the other Hardy spaces $H^p$ with $0<p\leq 1$, see~\cite{JPS, PV, Tol}.

Regarding the essential norm, which is defined as
$$
	\norm{A}_{\mathrm{ess}}:=\inf \left\{ \norm{A+K}\,:\, K \mbox{ compact operator}\right\},
$$
it is easy to see that for $a\in L^\infty$
\begin{equation}\label{e:ess norm of T}
	\|a\|_\infty \leq \|T(a)\|_{\mathrm{ess}} \leq c_p\|a\|_\infty,
\end{equation}
where the first inequality follows from~\cite[Theorem~2.30]{BoeSi2006} while the other one follows from~\eqref{e:Toeplitz norm}. As was communicated to us by E.~Shargorodsky, the estimate \eqref{e:ess norm of T} is sharp. This can be seen by considering the function $a(e^{it}) = \sin\frac{\pi}p \pm i \cos \frac{\pi}p$, $\pm t \in (0,\pi)$ and using the standard Fredholm theory of piecewise continuous symbols. For further results on the essential norm of Toeplitz operators, see~\cite{Sha}.

To define Toeplitz operators $T(a)$ on $\ell^p(\N_0)$ with $1\leq p\leq\infty$, we denote by $M^p$ the set of all $a\in L^1$ for which $L(a)$, defined by $L(a)\psi := a\ast\psi$ is a bounded operator on $\ell^p(\Z)$. The discrete convolution $\ast$ is defined by
$$
	(a\ast \psi)_j := \sum_{k\in\Z} \hat a_{j-k} \psi_k\quad (j\in\Z).
$$
 For $a\in M^p$, we define $T(a) : \ell^p(\N_0)\to \ell^p(\N_0)$ by
$$
	T(a) \psi = P(a\ast \psi) = \left( \sum_{k=0}^\infty \hat{a}_{j-k} \psi_k \right)_{j\in\N_0}
$$
where $P(\ldots, x_{-1}, x_0, x_1, \ldots) = (x_0, x_1, \ldots)$ is the standard projection of $\ell^p(\Z)$ onto $\ell^p(\N_0)$. In 1975 Duduchava~\cite{Duduchava} proved the following analog of the Brown--Halmos theorem when $1\leq p<\infty$: If $A$ is bounded on $\ell^p(\N_0)$ and if there is a sequence of complex numbers $\varphi_n$ such that $\langle Ae_k, e_j\rangle = \varphi_{j-k}$ for all $j,k\in\N_0$, then there is an $a\in M^p$ such that $A = T(a)$, $\hat{a}_n = \varphi_n$ for all $n\in\Z$, and
$$
	\|T(a)\| = \|L(a)\|.
$$
Here, $\{e_j : j \in \Z\}$ denotes the standard orthonormal basis of $\ell^2(\Z)$ and $\langle \cdot, \cdot\rangle$ is the dual pairing induced by the inner product on $\ell^2(\Z)$. For the proof of Duduchava's result see again \cite[Theorem~2.7]{BoeSi2006}. To obtain the essential norm of $T(a)$ one may proceed as in \cite[Section 3.7.2]{Lindner2006}, which yields $\|T(a)\|_{\mathrm{ess}} = \|L(a)\|$ for $a \in M^p$.

The result of Duduchava can in principle be transferred to $p = \infty$ as well. However, there is one caveat. A bounded linear operator on $\ell^{\infty}(\N_0)$ is not uniquely determined by the matrix elements $(\langle Ae_k, e_j\rangle)_{j,k \in \N_0}$. In other words, not every bounded linear operator on $\ell^{\infty}(\N_0)$ is given by an infinite matrix. Indeed, there are nonzero bounded linear operators $A$ on $\ell^{\infty}(\N_0)$ such that $\langle Ae_k, e_j\rangle = 0$ for all $j,k \in \N_0$. We refer to Chapter 1 of \cite{Lindner2006} for a discussion. As a consequence, Duduchava's result needs an additional assumption in case $p = \infty$. Namely, we need to assume that $A$ is given by an infinite matrix, that is,
\[(Ax)_j = \sum\limits_{k \in \N_0} \langle Ae_k, e_j\rangle x_k.\]
For such operators Duduchava's result is easily verified considering that the $\ell^{\infty}$-norm is 
the maximum absolute row sum norm in that case. Hence $M^\infty$ coincides with the Wiener algebra $W$ (see~\eqref{e:Mp inclusions} below) and in that case
\[\|T(a)\|_{\mathrm{ess}} = \|T(a)\| = \|L(a)\| = \sum\limits_{j \in \Z} |\hat{a}_j|\]
for $a \in W$.

Finally, we note that Toeplitz operators $T(a)$ are never compact unless $a=0$, which was proved in~\cite{BH} for $p=2$ and can be proved similarly for the other values of $p$. This well-known result also follows as a special case of Theorem \ref{thm:Toeplitz+Hankel_norm} (Theorem \ref{thm:Toeplitz+Hankel_compact_Hardy_1}, respectively) below.

Toeplitz operators are closely related to Hankel operators $H(a)$, which can be defined on Hardy spaces by setting
\begin{equation}\label{e:Hankel operator}
	H(a)f := PM(a)Jf = P(aJf),
\end{equation}
where $J$ is the flip operator, which is defined by $Jf(t) = \bar t f(\bar t)$ for $t\in \T$. It is easy to see that for $j,k\geq 0$,
$$
	\langle H(a) \chi_k, \chi_j \rangle = \langle a, \chi_{j+k+1} \rangle = \hat{a}_{j+k+1}.
$$
The Hankel operator $H(a)$ on $\ell^p(\N_0)$ is defined by
$$
	H(a) \varphi = \left( \sum_{k\in\N_0} \hat{a}_{j+k+1}\varphi_k\right)_{j\geq 0}.
$$
In 1957, Nehari~\cite{Nehari} showed that the infinite matrix $(a_{j+k+1})_{j,k\geq 0}$ generates a bounded operator on $\ell^2(\N_0)$ if and only if there is a function $b\in L^\infty$ such that $b_n = a_n$ for $n\in\N$. For $1<p<\infty$, Nehari's characterization extends to Hankel operators acting on $H^p$. However, there is no Nehari-type result for Hankel operators acting on $\ell^p(\N_0)$ for $p \notin \{1,2,\infty\}$ (see Open Problem~2.12 of~\cite{BoeSi2006}). For $p \in \{1,\infty\}$ one can easily see that $H(a)$ is bounded if and only if $a$ is in the Wiener algebra.

Unlike for Toeplitz operators, there is a large class of compact Hankel operators, and, more precisely, as shown by Hartman~\cite{Hartman} in 1958 for $p=2$, the Hankel operator $H(a)$ is compact on $H^p$ with $1<p<\infty$ if and only if there is a continuous function $b$ such that $\hat{a}_n = \hat{b}_n$ for all $n\in\N$. However, as for boundedness, there is no characterization of compact Hankel operators on $\ell^p(\N_0)$ in terms of their symbols for $p \notin \{1,2,\infty\}$. For further details on Hankel operators, see Section~2.3 of~\cite{BoeSi2006} and also~\cite{Peller}.

If $T(a)$ and $H(b)$ are both bounded, then obviously the \emph{Toeplitz+Hankel operator} $T(a) + H(b)$ is bounded. It is natural to ask whether the converse statement is true. We answer this question in the affirmative and also show that the compactness of $T(a)+H(b)$ implies that both $T(a)$ and $H(b)$ are compact (and then, necessarily, $a=0$). 
Furthermore, we obtain  estimates for the norm and the essential norm of bounded Toeplitz+Hankel operators. 
For the spectral properties of $T(a) + H(b)$ with bounded symbols, see~\cite{BE2013, DS, DS2}  and the references therein. In addition to intrinsic interest, these types of operators play a role in the asymptotic study of Toeplitz+Hankel determinants; see~\cite{BE2017, GI} and the references therein.

Our main result is an analog of the Brown--Halmos theorem. It gives an intrinsic characterization of bounded operators of Toeplitz+Hankel form with unique symbols (see Theorem~\ref{main1} and Theorem~\ref{thm:main2}).

We start with the sequence spaces $\ell^p(\N_0)$ by exploiting the structures of and connections between infinite Toeplitz, Laurent, Hankel, and checkerboard matrices, using operator theoretic methods, such as limit operator techniques. To deal with $p = \infty$ we use pre-adjoints which is essentially motivated by similar arguments in \cite{Lindner2006}.  For the treatment of Toeplitz+Hankel operators on Hardy spaces, we consider sesquilinear (Toeplitz, Hankel etc.) forms and obtain similar characterizations. However, we only deal with the case $1<p<\infty$ in Hardy space.

\section{Sequence spaces}

Throughout this section, we assume that $1\leq p\leq\infty$ unless stated otherwise. For an infinite set $J\subseteq\Z$, we say that the infinite matrix $(A_{j,k})_{j,k\in J}$ generates a bounded linear operator on $\ell^p(J)$ if there is a positive constant $c$ such that for each $\varphi\in \ell^p(J)$, the sequence $\psi=(\psi_j)_{j\in J}$, where $\psi_j= \sum_{k\in J} A_{j,k} \varphi_k$, is in $\ell^p(J)$ and $\|\psi\| \leq c \|\varphi\|$. In that case, there is a bounded linear operator $A$ acting on $\ell^p(J)$ such that
$$
	A_{j,k} = \langle Ae_k, e_j \rangle
$$
for all $j,k\in J$, where $(e_k)_{k\in J}$ is the standard basis for $\ell^2(J)$ and
$$
	\langle \varphi, \psi \rangle = \sum_{j\in J} \varphi_j \overline{\psi_j}
$$
for $\varphi\in \ell^p(J)$ and $\psi\in \ell^q(J)$ with $1/p+1/q=1$. If $p < \infty$, every bounded linear operator is generated by an infinite matrix. For $p = \infty$ this is not the case, but we will only consider operators that are generated by infinite matrices. We will say that an infinite matrix is bounded if the corresponding operator is bounded. The norm of the matrix is given by the operator norm.

For the sake of completeness, we will also briefly mention bounded linear operators $A:c_0(J)\to\ell^\infty(J)$  (see Remark \ref{rem:3}(b) below). Here $c_0(J)\subset \ell^\infty(J)$
stands for the set of sequences $\varphi=(\varphi_j)_{j\in J}$ for which $\lim\limits_{|j|\to+\infty} |\varphi_j|=0$. In this case, every bounded linear operator is generated by an infinite matrix because sequences of finite support are dense in $c_0(J)$.

For a bounded sequence $\varphi = (\varphi_n)_{n \in \Z}$, let $L(\varphi)$ denote the (formal) Laurent matrix given by
\[
	L(\varphi)_{j,k} = \varphi_{j-k} \quad (j,k \in \Z).
\]
Similarly, we define the Toeplitz matrix $T(\varphi)$ by
\[
	T(\varphi)_{j,k} = \varphi_{j-k} \quad (j,k \in \N_0)
\]
and the Hankel matrix $H(\varphi)$ by
\[
	H(\varphi)_{j,k} = \varphi_{j+k+1} \quad (j,k \in \N_0).
\]
In case these matrices define bounded operators on $\ell^p(\Z)$ or $\ell^p(\N_0)$, respectively, we call them Laurent, Toeplitz or Hankel \emph{operators}. A two-sided infinite matrix $C$ is called a checkerboard matrix if $C_{j,k} = C_{j+1,k+1} = C_{j+1,k-1}$ for all $j,k \in \Z$ (and likewise for one-sided infinite matrices $C$). Here, two-sided refers to $J = \Z$ and one-sided refers to $J = \N_0$.

We say that a sequence of matrices $A_n$ converges elementwise to a matrix $A$ if $(A_n)_{j,k}$ converges to $A_{j,k}$ for every $j,k \in J$. Similarly, a sequence $(\varphi_n)_{n \in \N}$ in $\ell^p(J)$ converges elementwise to $\varphi \in \ell^p(J)$ if $(\varphi_n)_j \to (\varphi)_j$ for all $j \in J$.

\begin{lem} \label{lem:elementwise_convergence_bounded}
If $(A_n)_{n \in \N}$ is a uniformly bounded sequence of matrices $A_n$ that converges elementwise to the matrix $A$, then $A$ generates a bounded linear operator and
$$
	\|A\| \leq \sup_{n\in\N} \|A_n\|.
$$
\end{lem}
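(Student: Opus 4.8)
The plan is to set $M := \sup_{n\in\N}\|A_n\|$ and to split the argument into two parts: (i) showing that the series defining $A\varphi$ converges for every $\varphi\in\ell^p(J)$, so that $A$ really acts as a matrix; and (ii) proving the norm bound $\|A\varphi\|\le M\|\varphi\|$. Throughout, $q$ denotes the conjugate exponent, $1/p+1/q=1$.

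First I would control the rows of $A$. For fixed $j$, the functional $\varphi\mapsto(A_n\varphi)_j=\sum_k(A_n)_{j,k}\varphi_k$ has norm at most $\|A_n\|\le M$, and (using that $A_n$ is bounded on $\ell^p$, and matrix-generated in the case $p=\infty$) this norm equals the $\ell^q$-norm of the $j$-th row $((A_n)_{j,k})_k$, via the duality $(\ell^p)^*=\ell^q$. Hence every row of every $A_n$ lies in $\ell^q$ with norm $\le M$. Testing against an arbitrary finite set $G\subseteq J$ and using elementwise convergence, $\sum_{k\in G}|A_{j,k}|^q=\lim_n\sum_{k\in G}|(A_n)_{j,k}|^q\le M^q$ (with the obvious modification when $q=\infty$); taking the supremum over $G$ shows that each row of $A$ lies in $\ell^q$ with $\ell^q$-norm $\le M$. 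By Hölder's inequality the series $(A\varphi)_j=\sum_k A_{j,k}\varphi_k$ then converges absolutely for every $\varphi\in\ell^p(J)$, with $|(A\varphi)_j|\le M\|\varphi\|$, so $A\varphi$ is a well-defined sequence. This already settles the case $p=\infty$, where the operator norm of a matrix-generated operator equals the maximal absolute row sum $\sup_j\|(A_{j,k})_k\|_1$, which we have just bounded by $M$.

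For $1\le p<\infty$ I would prove the norm bound first on finitely supported sequences and then extend by density. If $\varphi$ has finite support, then $(A_n\varphi)_j\to(A\varphi)_j$ for each $j$ (a finite sum of convergent entries), so for every finite $F\subseteq J$,
$$\Big(\sum_{j\in F}|(A\varphi)_j|^p\Big)^{1/p}=\lim_{n\to\infty}\Big(\sum_{j\in F}|(A_n\varphi)_j|^p\Big)^{1/p}\le\sup_{n}\|A_n\|\,\|\varphi\|=M\|\varphi\|;$$
taking the supremum over $F$ gives $\|A\varphi\|\le M\|\varphi\|$. Since finitely supported sequences are dense in $\ell^p(J)$ for $p<\infty$, the map $A$ extends to a bounded operator $\hat A$ with $\|\hat A\|\le M$. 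Finally I would reconcile $\hat A$ with the matrix: approximating $\varphi\in\ell^p(J)$ by its truncations $\varphi^{(N)}$, continuity gives $(\hat A\varphi^{(N)})_j\to(\hat A\varphi)_j$, while the row bound from the first part yields $(\hat A\varphi^{(N)})_j=\sum_{|k|\le N}A_{j,k}\varphi_k\to\sum_k A_{j,k}\varphi_k=(A\varphi)_j$. Hence $\hat A$ is generated by the matrix $A$, and $\|A\|=\|\hat A\|\le M$.

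The main obstacle I anticipate is the case $p=\infty$: there one cannot argue by density (finitely supported sequences are not dense) nor expect $A_n\varphi\to A\varphi$ elementwise. The latter genuinely fails — for instance the matrices with $(A_n)_{j,k}=\delta_{k,n}$ all have norm $1$ and converge elementwise to $0$, yet $A_n\varphi=\varphi_n\,(1,1,\dots)$ does not tend to $0$ for $\varphi\in\ell^\infty\setminus c_0$. The device that circumvents this, exactly as carried out in the first paragraph, is to avoid the images $A_n\varphi$ altogether and bound the rows directly, exploiting that on $\ell^\infty$ the operator norm is the supremum of the $\ell^1$-norms of the rows; the uniform row bound then transfers to $A$ by Fatou's inequality, with no reference to the action on vectors.
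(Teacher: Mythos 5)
Your proof is correct. For $1\le p<\infty$ it follows essentially the same route as the paper: test $A\varphi$ on finitely supported $\varphi$, where $A_n\varphi\to A\varphi$ elementwise, pass to the limit over finite index sets (this is exactly the Fatou step the paper invokes), and extend by density. Your added truncation argument reconciling the extension $\hat A$ with the matrix $A$ is a point of extra care: the paper's definition of ``generates a bounded operator'' requires the series $\sum_k A_{j,k}\varphi_k$ to behave well for \emph{every} $\varphi\in\ell^p(J)$, not just on a dense subspace, and the paper leaves this step implicit. Where you genuinely diverge is the case $p=\infty$: the paper passes to the Hermitian transposes $A_n^{\triangleleft}$, viewed as pre-adjoints acting on $\ell^1(J)$ with $\|A_n^{\triangleleft}\|=\|A_n\|$, and reduces to the already-settled case $p=1$; you instead bound the $\ell^1$-norms of the rows of $A$ directly via duality and finite subsets, and use that the norm of a matrix-generated operator on $\ell^\infty$ is the supremum of the row sums. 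The two arguments are dual formulations of the same fact (the rows of $A_n$ are the columns of $A_n^{\triangleleft}$), so neither buys more generality; your version is more self-contained and makes explicit why the matrix action is defined on all of $\ell^\infty$, while the paper's is shorter because it recycles the $p=1$ case verbatim. Your counterexample $(A_n)_{j,k}=\delta_{k,n}$ correctly identifies why the naive elementwise-convergence argument cannot work for $p=\infty$.
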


\begin{proof}
First assume $p < \infty$. Let $M := \sup\limits_{n \in \N} \norm{A_n}$ and $\varphi \in \ell^p(J)$ with finite support. As $\varphi$ has finite support, the sequence $(A_n \varphi)_{n \in \N}$ converges elementwise to $A\varphi$. By Fatou's lemma, $A\varphi \in \ell^p(J)$ and $\norm{A\varphi}_p \leq M\norm{\varphi}_p$. As the elements with finite support are dense in $\ell^p(J)$, $A$ extends to a bounded operator on $\ell^p(J)$ and the norm estimate follows.

Now assume that $p = \infty$ and consider the Hermitian transpose of the matrix $A_n$ defined by $(A_n^{\triangleleft})_{j,k} = \overline{(A_n)_{k,j}}$. Notice that $A_n^{\triangleleft}$ generates an operator on $\ell^1(J)$ and is called a pre-adjoint of $A_n$ because $(A_n^{\triangleleft})^* = A_n$. It follows $\|A_n^{\triangleleft}\| = \|A_n\|$ for all $n \in \N$. We can therefore apply the above for $p  = 1$ to the sequence $(A_n^{\triangleleft})_{n \in \N}$ to get the required result.
\end{proof}

We denote by $\hat a$ the sequence of the Fourier coefficients of $a\in L^1$. Note that necessarily $\hat{a}\in c_0(\Z)$. For $1\leq p\leq\infty$, the class $M^p$ of symbols $a\in L^1$ defined in the previous section can be described as the set of all $a\in L^1$ for which the Laurent matrix $L(\hat a)$ generates a bounded operator on $\ell^p(\Z)$. This operator is of course exactly $L(a)$ as defined in the introduction.

It is well known that for $1\leq p \leq r \leq 2$ and $1/p+1/q = 1$, we have $M^p = M^q$,
\begin{equation}\label{e:Mp inclusions}
	W = M^1 \subseteq M^p \subseteq M^r \subseteq M^2 = L^\infty
\end{equation}
and
$$
	\|a\|_\infty \leq \|L(a)\|_r \leq \|L(a)\|_p \leq \|a\|_W := \sum_{j\in\Z} |\hat{a}_j|,
$$
where $W$ is the Wiener algebra of continuous functions $a$ for which $(\hat{a}_j)_{j \in \Z} \in \ell^1(\Z)$. From these, it follows that $M^p\subseteq L^\infty$. Each $M^p$ is a Banach algebra with $\|a\|_{M^p} := \|L(a)\|$ for $a\in M^p$. These properties of the algebras $M^p$ and the following converse statement can be found in~\cite[Sect.~2.3--2.5]{BoeSi2006}, for example.

\begin{prop}\label{prop:Laurent.discrete}
Let $1\leq p<\infty$, $A$ be a bounded operator on $\ell^p(\Z)$ and $\langle Ae_k, e_j\rangle = \varphi_{j-k}$ for all $j,k\in\Z$, where $(\varphi_j)_{j\in\Z}$ is a sequence of complex numbers. Then there is an $a\in M^p$ such that $A = L(a)$ and $\hat{a}_j = \varphi_j$ for all $j\in\Z$.  
\end{prop}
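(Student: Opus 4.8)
The plan is to realize the prescribed diagonal sequence $(\varphi_j)$ as the Fourier coefficients of an honest $L^1$-function and then read off everything else from the definition of $M^p$. First observe that $\varphi_j = \langle Ae_0, e_j\rangle$, so $(\varphi_j)_{j\in\Z} = Ae_0 \in \ell^p(\Z) \subseteq \ell^\infty(\Z)$; in particular $\varphi$ is bounded. For $0 < r < 1$ I introduce the Abel-regularized symbols
\[
	a_r(t) := \sum_{j\in\Z} \varphi_j\, r^{\abs{j}} e^{ijt}.
\]
Since $\sum_j \abs{\varphi_j} r^{\abs{j}} \le \norm{\varphi}_\infty \sum_j r^{\abs{j}} < \infty$, each $a_r$ lies in the Wiener algebra $W \subseteq M^p$, so $L(a_r)$ is bounded and $\widehat{(a_r)}_j = \varphi_j r^{\abs{j}}$. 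The key reduction is that once I produce a single $a\in L^1$ with $\hat a_j = \varphi_j$ for all $j$, I am done: the matrix $L(\hat a) = L(\varphi)$ then has entries $\varphi_{j-k} = \langle Ae_k, e_j\rangle$, and since $p < \infty$ an operator is determined by its matrix, whence $A = L(a)$ is bounded and therefore $a\in M^p$ by the very definition of $M^p$.

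The core estimate is $\norm{L(a_r)} \le \norm{A}$, uniformly in $r$. Let $E_\theta$ denote the modulation operator $(E_\theta\psi)_j = e^{ij\theta}\psi_j$, which is an isometry on every $\ell^s(\Z)$. A direct computation gives $(E_\theta^{-1} A E_\theta)_{j,k} = \varphi_{j-k} e^{-i(j-k)\theta}$, so that, with the Poisson kernel $P_r(\theta) = \sum_{m} r^{\abs{m}} e^{im\theta} \ge 0$ (which satisfies $\frac1{2\pi}\int_0^{2\pi} e^{-im\theta} P_r(\theta)\,d\theta = r^{\abs{m}}$), one has
\[
	(L(a_r))_{j,k} = \varphi_{j-k} r^{\abs{j-k}} = \frac1{2\pi}\int_0^{2\pi} (E_\theta^{-1} A E_\theta)_{j,k}\, P_r(\theta)\,d\theta .
\]
Testing against finitely supported $\psi\in\ell^p(\Z)$ and $\eta\in\ell^q(\Z)$ and interchanging the finite sums with the integral yields $\langle L(a_r)\psi, \eta\rangle = \frac1{2\pi}\int_0^{2\pi} \langle E_\theta^{-1}AE_\theta\psi, \eta\rangle P_r(\theta)\,d\theta$; since $\abs{\langle E_\theta^{-1}AE_\theta\psi,\eta\rangle} \le \norm{A}\,\norm{\psi}_p\norm{\eta}_q$ and $\frac1{2\pi}\int_0^{2\pi} P_r = 1$, taking the supremum over such $\psi,\eta$ gives $\norm{L(a_r)} \le \norm{A}$. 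Combined with $\norm{a_r}_\infty \le \norm{L(a_r)}$ (which follows from the norm inequalities preceding the proposition, using $M^p=M^q$ when $p>2$), this shows $\norm{a_r}_\infty \le \norm{A}$ for all $r\in(0,1)$. I expect this modulation/averaging step to be the main technical point; in particular one must phrase the ``operator average'' rigorously through the sesquilinear pairing on finitely supported sequences, so as to sidestep norm-continuity issues of $\theta\mapsto E_\theta^{-1}AE_\theta$.

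Finally I would pass to the limit $r\to 1$. The family $\set{a_r}$ lies in the ball of radius $\norm{A}$ of $L^\infty = (L^1)^*$; since $L^1(\T)$ is separable, this ball is weak-$*$ sequentially compact, so some sequence $a_{r_n}$ converges weak-$*$ to a function $a\in L^\infty \subseteq L^1$. Pairing against $e^{-ijt}/(2\pi) \in L^1(\T)$ gives $\hat a_j = \lim_n \widehat{(a_{r_n})}_j = \lim_n \varphi_j r_n^{\abs{j}} = \varphi_j$ for every $j$. Thus $a\in L^1$ has $\hat a = \varphi$, and by the reduction of the first paragraph $A = L(a)$ with $a\in M^p$, as required. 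For $1\le p\le 2$ one can shortcut the averaging entirely: then $\varphi\in\ell^p\subseteq\ell^2$ already defines $a\in L^2\subseteq L^1$ via Plancherel, and the case $p>2$ follows by applying this to the Hermitian transpose of $A$ on $\ell^q$ together with $M^p=M^q$.
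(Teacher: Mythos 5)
Your argument is correct. Note, however, that the paper does not actually prove Proposition~\ref{prop:Laurent.discrete}: it is imported from \cite[Sect.~2.3--2.5]{BoeSi2006}, and your Poisson-kernel argument is essentially the classical proof from that source (there one usually averages against the Fej\'er kernel rather than the Poisson kernel, but the mechanism --- conjugate $A$ by the modulations $E_\theta$, average against a nonnegative summability kernel of total mass one to get $\norm{L(a_r)}\le\norm{A}$, pass to $\norm{a_r}_\infty\le\norm{A}$, and extract a weak-$*$ limit in $L^\infty$ --- is identical). All the individual steps check out: the interchange of the finite sums with the integral is legitimate, the identification $A=L(a)$ from equality of matrices uses only that finitely supported sequences are dense for $p<\infty$ and that coordinate functionals separate points, and the weak-$*$ compactness argument is standard. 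One remark worth making: your closing observation actually supersedes the entire construction. Since $\varphi=Ae_0\in\ell^p(\Z)\subseteq\ell^2(\Z)$ for $p\le 2$, Riesz--Fischer immediately produces $a\in L^2\subseteq L^1$ with $\hat a=\varphi$, and the case $p>2$ follows by applying this to the adjoint on $\ell^q$; the conclusion $a\in M^p$ (and hence $a\in L^\infty$ with $\norm{a}_\infty\le\norm{a}_{M^p}=\norm{A}$) then already follows from your ``reduction'' paragraph together with the inequalities quoted before the proposition. The averaging machinery is therefore redundant for the statement as posed, though it is the argument that generalizes (e.g.\ to settings where no Hilbert-space embedding is available) and it yields the bound $\norm{a}_\infty\le\norm{A}$ without invoking $\norm{a}_\infty\le\norm{L(a)}_p$.
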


\begin{rem}\label{rem:3}
(a) 
Assuming that $A$ is generated by an infinite matrix, one can prove the same result for $p = \infty$ by proceeding as in Lemma \ref{lem:elementwise_convergence_bounded} and using that $M^1 = M^{\infty}=W$. 
Alternatively, it follows by just considering the maximum absolute row sum norm.

(b)
The result remains true also for bounded linear operators $A:c_0(\Z)\to \ell^\infty(\Z)$.
In this case, we obtain $A=L(a)$ with $a\in W$, and, as a consequence, $A$ maps $c_0(\Z)$ into itself.
In fact, all the results in this section remain true for bounded linear operators $A:c_0(J)\to \ell^\infty (J)$ with $J=\Z$ or $J=\N_0$.
To see this notice that any such operator can be extended via its matrix representation to a bounded linear  operator on $\ell^\infty(J)$.
We leave the detailed verification to the reader.
\hfill $\Box$
\end{rem}

For the following we will need the flip operator $J \from \ell^p(\Z) \to \ell^p(\Z)$, which is given by 
$$
	(J\varphi)_k = \varphi_{-k-1}.
$$

\begin{prop} \label{prop:Laurent_bounded}
For $1\le p\le \infty$, let $\varphi, \psi\in \ell^\infty(\Z)$ and assume $A = L(\varphi) + L(\psi)J$ generates a  bounded linear operator on $\ell^p(\Z)$. 
Then both $L(\varphi)$ and $L(\psi)$ can be written as a sum of a bounded Laurent matrix and a checkerboard matrix $C$, i.e.,
$$
L(\varphi)=L(\hat{a})+C,\qquad L(\psi)=L(\hat{b})-CJ
$$
with $a,b\in M^p$, and $\max\{ \norm{a}_{M^p},\norm{b}_{M^p}\}\le \norm{A}\le \norm{a}_{M^p}+\norm{b}_{M^p}.$
\end{prop}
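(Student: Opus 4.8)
The plan is to reduce the assertion to the single statement that $A$ itself can be written as $A = L(\hat a) + L(\hat b)J$ with $a,b \in M^p$, and then recover the two displayed decompositions for free. Indeed, granting such a representation, the matrix $L(\varphi) - L(\hat a) = L(\hat b)J - L(\psi)J$ is simultaneously constant along diagonals (being a difference of Laurent matrices) and constant along anti-diagonals (being a difference of matrices of the form $L(\cdot)J$, whose $(j,k)$-entry depends only on $j+k$). An elementary computation shows that any matrix with both properties is a checkerboard matrix $C$; this gives $L(\varphi) = L(\hat a) + C$ and, after right-multiplication by $J$, $L(\psi) = L(\hat b) - CJ$. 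The upper norm bound is then the triangle inequality together with the fact that $J$ is an isometric involution, so $\norm{A} = \norm{L(\hat a) + L(\hat b)J} \le \norm{a}_{M^p} + \norm{b}_{M^p}$.

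To produce the representation I would exploit the bilateral shift $V$ on $\ell^p(\Z)$, an isometry commuting with every Laurent matrix. A direct index computation gives
\[
	V^{-n}AV^{n} = L(\varphi) + L(\psi^{(2n)})J, \qquad V^{-n}AV^{-n} = L(\varphi^{(2n)}) + L(\psi)J,
\]
where $\varphi^{(2n)}_l = \varphi_{l+2n}$ and $\psi^{(2n)}_l = \psi_{l+2n}$; each has norm at most $\norm{A}$. The first conjugation freezes the Laurent part and pushes the symbol of the Hankel part off to infinity, and the second does the reverse. Fixing a Banach limit $\mathrm{LIM}$ in $n$ and setting $m^{(1)}_l := \mathrm{LIM}_n \psi_{l+2n}$ and $m^{(2)}_l := \mathrm{LIM}_n \varphi_{l+2n}$, the shift-invariance of $\mathrm{LIM}$ forces both $m^{(1)}$ and $m^{(2)}$ to be $2$-periodic. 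Pairing $V^{-n}AV^{n}x$ against a finitely supported functional $y$, pulling $\mathrm{LIM}$ through the resulting finite sum, and using that a Banach limit is dominated by the supremum, I obtain
\[
	\abs{\langle (L(\varphi) + L(m^{(1)})J)x, y\rangle} \le \norm{A}\,\norm{x}_p\norm{y}_q
\]
for all finitely supported $x,y$, and hence that $L(\varphi) + L(m^{(1)})J$ is bounded with norm at most $\norm{A}$; the analogous estimate holds for $L(m^{(2)}) + L(\psi)J$.

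Because $m^{(1)}$ and $m^{(2)}$ are $2$-periodic, $L(m^{(1)})J$ and $L(m^{(2)})$ are checkerboard matrices, so $L(\varphi) + L(m^{(1)})J$ is a \emph{bounded Laurent} matrix and $L(m^{(2)}) + L(\psi)J$ a \emph{bounded} matrix of the form $L(\cdot)J$. Proposition~\ref{prop:Laurent.discrete} (its $p=\infty$ analogue from Remark~\ref{rem:3}) then provides $a,b \in M^p$ with $L(\hat a) = L(\varphi) + L(m^{(1)})J$ and $L(\hat b)J = L(m^{(2)}) + L(\psi)J$, and $\norm{a}_{M^p}, \norm{b}_{M^p} \le \norm{A}$, which is precisely the lower norm bound. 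It remains to check that these $a,b$ work, i.e.\ that $A = L(\hat a) + L(\hat b)J$. Subtracting, $L(\hat a) + L(\hat b)J - A = L(m^{(1)})J + L(m^{(2)})$ is a sum of checkerboard matrices, hence checkerboard, and it is bounded. I claim the only bounded checkerboard matrix is $0$ (testing on the constant sequence, or on $e_0$, shows a nonzero checkerboard matrix does not even map $\ell^p(\Z)$ into itself). Thus the difference vanishes, $A = L(\hat a) + L(\hat b)J$, and the proof is complete for $1 \le p < \infty$.

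The step I expect to be most delicate is this final coupling: the two shift-extractions are carried out independently and produce a priori unrelated checkerboard symbols $m^{(1)}, m^{(2)}$, and it is only the rigidity statement ``bounded checkerboard $\Rightarrow 0$'' that glues them into a single $C$ (with $C = -L(m^{(1)})J = L(m^{(2)})$). A secondary technical point is the case $p = \infty$, where the dual-pairing argument is not directly available; there I would pass to the pre-adjoint on $\ell^1(\Z)$, which again has the form $L(\varphi') + L(\psi')J$, apply the case $p=1$ (recall $M^1 = M^\infty = W$), and transpose the resulting decomposition back, exactly in the spirit of Lemma~\ref{lem:elementwise_convergence_bounded} and Remark~\ref{rem:3}.
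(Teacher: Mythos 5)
Your argument is correct, and it reaches the same representation $A=L(\hat{a})+L(\hat{b})J$ that the paper obtains, but by a noticeably different mechanism. The paper forms the displacement $A-V^jAV^j=L(\varphi-V^{2j}\varphi)$, which annihilates the Hankel-type part exactly, and then Ces\`aro-averages these displacements; an elementwise-convergent subsequence of the averages $C_n=\frac{1}{2n+1}\sum_{j=-n}^n L(V^{2j}\varphi)$ produces the checkerboard correction $C$ directly, $L(\varphi)-C$ is bounded by Fatou (Lemma~\ref{lem:elementwise_convergence_bounded}), and the decomposition of $L(\psi)$ then falls out algebraically from $L(\psi)+CJ=(A-L(\hat{a}))J$. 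You instead conjugate $A$ itself by $V^{\pm n}$, replace subsequence extraction by a Banach limit, and read off the two corrections $m^{(1)},m^{(2)}$ as $2$-periodic sequences; the price is that your two extractions are independent, so you need the rigidity statement that a bounded checkerboard matrix vanishes (which the paper also uses, later, in Theorem~\ref{main1}) to glue them into a single $C$ and to verify $A=L(\hat{a})+L(\hat{b})J$ at all. What your route buys is a slightly cleaner lower norm bound: since $L(\hat{a})$ and $L(\hat{b})J$ arise directly as Banach limits of operators of norm exactly $\norm{A}$, you get $\max\{\norm{a}_{M^p},\norm{b}_{M^p}\}\le\norm{A}$ for free, whereas the paper has to run a separate elementwise-limit argument using $\hat{b}\in c_0(\Z)$. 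Two small points to tighten: (1) when you pass from the bound on finitely supported $x,y$ to boundedness of the matrix $L(\varphi)+L(m^{(1)})J$ as a generator in the paper's sense, you should note that the rows then lie in $\ell^q$, so the matrix action converges on all of $\ell^p$ (the same point is implicit in Lemma~\ref{lem:elementwise_convergence_bounded}); (2) for complex sequences the inequality $\abs{\mathrm{LIM}(c)}\le\limsup_n\abs{c_n}$ needs the standard rotation trick $\mathrm{Re}\bigl(e^{i\theta}\mathrm{LIM}(c)\bigr)=\mathrm{LIM}\bigl(\mathrm{Re}(e^{i\theta}c)\bigr)$, not just positivity. Your reduction of $p=\infty$ to $p=1$ via the Hermitian transpose matches the paper's own device and is fine, provided you check that transposition carries $CJ$ back to a term of the form $C'J$ (it does, since $J^{\triangleleft}=J$ and $J$ conjugates checkerboard matrices to checkerboard matrices).
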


\begin{proof}
Consider the standard shift $V = L(e_1)$. Then
\[
	A - VAV = L(\varphi) + L(\psi)J - VL(\varphi)V - VL(\psi)JV = L(\varphi-V^2\varphi).
\]
Similarly, $A - V^jAV^j = L(\varphi-V^{2j}\varphi)$. Clearly, $\norm{L(\varphi-V^{2j}\varphi)} \leq 2\norm{A}$. For $n \in \N$, consider the decomposition
\[
	L(\varphi) = \frac{1}{2n+1} \sum\limits_{j = -n}^n L(\varphi-V^{2j}\varphi) + \frac{1}{2n+1} \sum\limits_{j = -n}^n L(V^{2j}\varphi).
\]
Let $C_n := \frac{1}{2n+1} \sum\limits_{j = -n}^n L(V^{2j}\varphi)$. As $\varphi \in \ell^{\infty}(\Z)$, the matrix elements of $C_n$ are uniformly bounded by $\|\varphi\|_{\ell^{\infty}(\Z)}$. We can therefore choose an increasing sequence $(n_k)_{k \in \N}$ such that $(C_{n_k})_{k \in \N}$ converges elementwise to some matrix $C$. As
\begin{align*}
\left|(C_{n_k})_{l,m} - (C_{n_k})_{l+1,m-1}\right| &= \frac{1}{2n_k+1} \left|\sum\limits_{j = -n_k}^{n_k} \varphi_{l-m-2j} - \sum\limits_{j = -n_k}^{n_k} \varphi_{l-m-2j+2}\right|\\
&= \frac{1}{2n_k+1} \left|\varphi_{l-m-2n_k} - \varphi_{l-m+2n_k+2}\right|\\
&\leq \frac{2}{2n_k+1}\|\varphi\|_{\ell^{\infty}(\Z)},
\end{align*}
we get $(C)_{l,m} = (C)_{l+1,m-1}$ for all $l,m \in \Z$. Moreover, as every $C_n$ is a Laurent matrix, $C$ must be a Laurent matrix as well, that is, $(C)_{l,m} = (C)_{l+1,m+1}$. This implies that $C$ is a checkerboard matrix. Furthermore, the sequence $\left(L(\varphi) - C_{n_k}\right)_{k \in \N}$ converges elementwise to $L(\varphi) - C$. As
\[
	\norm{L(\varphi) - C_{n_k}} = \norm{\frac{1}{2n_k+1} \sum\limits_{j = -n_k}^{n_k} L(\varphi-V^{2j}\varphi)} \leq 2\norm{A}
\]
for all $k \in \N$, the limit $L(\varphi) - C$ is again bounded by Lemma \ref{lem:elementwise_convergence_bounded}. Moreover, since both $L(\varphi)$ and $C$ are Laurent matrices, $L(\varphi) - C$ is again a Laurent matrix. We conclude that 
$L(\varphi) - C =L(\hat{a})$ with $a\in M^p$  by Proposition~\ref{prop:Laurent.discrete}.

For $L(\psi)$, we use $J^2 = I$ to obtain
\[L(\psi) +CJ  = AJ - L(\varphi)J+CJ = (A - L(\hat{a}))J .\]
Here, the left hand side is a Laurent matrix and the right hand side is bounded. Therefore, it is equal to $L(\hat{b})$ for some $b\in M^p$ 
again by Proposition~\ref{prop:Laurent.discrete}. 

To obtain the norm estimate, observe that $A=L(\hat{a})+L(\hat{b})J$. As $\hat{b}\in c_0(\Z)$, the sequence 
$$
V^{-n} A V^n = L(\hat{a}) + L(e_{-2n}\ast \hat{b})J
$$
converges elementwise to $L(\hat{a})$. By Lemma \ref{lem:elementwise_convergence_bounded} this implies that $\norm{a}_{M^p}=\norm{L(\hat{a})}\le \norm{A}$.
Now consider $AJ=L(\hat{a})J+L(\hat{b})$ instead of $A$ to get $\norm{b}_{M^p}\le\norm{A}$. The upper estimate is trivial.
\end{proof}

\begin{thm} \label{thm:Toeplitz+Hankel_bounded}
For $1\le p\le \infty$, let
 $\varphi \in \ell^{\infty}(\Z)$, $\psi \in \ell^{\infty}(\N)$ and assume that $A := T(\varphi) + H(\psi)$ generates a bounded operator on $\ell^p(\N_0)$. 
Then $\varphi = \varphi_1 + \varphi_2$, where $\varphi_1$ is the Fourier sequence of an $M^p$-function and $T(\varphi_2)$ is a checkerboard matrix. Moreover, $\psi = \psi_1 + \psi_2$, where $H(\psi_1)$ is a bounded Hankel matrix with $\psi_1\in c_0(\N)$ and $H(\psi_2) = -T(\varphi_2)$. Both decompositions are unique.
\end{thm}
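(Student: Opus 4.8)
The plan is to reduce Theorem~\ref{thm:Toeplitz+Hankel_bounded} to the two-sided (Laurent) result, Proposition~\ref{prop:Laurent_bounded}, by extending the one-sided operator $A = T(\varphi) + H(\psi)$ on $\ell^p(\N_0)$ to a two-sided operator on $\ell^p(\Z)$ of the form $L(\varphi) + L(\tilde\psi)J$, so that the structural decomposition of the Laurent pieces can be transported back. First I would recall the matrix identity connecting Toeplitz+Hankel and Laurent+flip structure: the Toeplitz block $(\varphi_{j-k})_{j,k\geq 0}$ and the Hankel block $(\psi_{j+k+1})_{j,k\geq 0}$ together form the restriction to $\N_0 \times \N_0$ of a two-sided matrix $L(\varphi) + L(\tilde\psi)J$, where $\tilde\psi$ is the sequence on $\Z$ obtained from $\psi$ via $(\tilde\psi)_n = \psi_n$ for $n \geq 1$ (the negative indices being irrelevant once restricted by the projection $P$). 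Concretely, using the flip $(J\varphi)_k = \varphi_{-k-1}$, one checks that $PL(\tilde\psi)JP$ restricted to $\N_0$ has entries $\tilde\psi_{j+k+1}$, matching $H(\psi)$.

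\medskip

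\noindent The main step is to show that boundedness of the one-sided $A$ forces boundedness of a suitable two-sided extension. Here I would exploit the dilation/compression technique already visible in the proof of Proposition~\ref{prop:Laurent_bounded}: conjugating by the shift $V = L(e_1)$ moves the ``corner'' where the Toeplitz and Hankel parts interact off to infinity. Specifically, I expect that $V^{-n} \hat{A} V^n$ (for an appropriate two-sided $\hat A$ built from $A$ by reflecting across the diagonal using $J$) converges elementwise to a genuine two-sided operator, and uniform boundedness comes from $\|V^{-n}\hat A V^n\| = \|\hat A\|$ together with Lemma~\ref{lem:elementwise_convergence_bounded}. Once I have $L(\varphi) + L(\tilde\psi)J$ bounded on $\ell^p(\Z)$, Proposition~\ref{prop:Laurent_bounded} yields $L(\varphi) = L(\hat a) + C$ and $L(\tilde\psi) = L(\hat b) - CJ$ with $a, b \in M^p$ and $C$ a checkerboard matrix. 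Setting $\varphi_1 = \hat a$, reading off $\varphi_2$ from the checkerboard part $C$ (so that $T(\varphi_2) = PCP$ is the one-sided checkerboard block), and $\psi_1 = (\hat b)|_{\N}$, I recover the stated decomposition; the identity $H(\psi_2) = -T(\varphi_2)$ should fall out of the relation $L(\tilde\psi) = L(\hat b) - CJ$ after compressing by $P$ and using $PCJP$ versus $-PCP$.

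\medskip

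\noindent \textbf{The hard part} will be setting up the two-sided extension correctly and verifying that the off-diagonal interaction between the Toeplitz and Hankel parts genuinely vanishes in the shift limit, so that the extension is consistent and bounded. The subtlety is that $H(\psi)$ lives only in the finite corner near the origin, and a naive reflection could produce a matrix that is not of pure Laurent+flip type; I must track the indices carefully to confirm the limiting matrix has no spurious lower-triangular contamination. I would handle this by explicitly computing the entries of $V^{-n}\hat A V^n$ and checking that the Hankel anti-diagonal structure becomes, after the flip $J$, a bona fide Laurent structure in the limit, with the checkerboard matrix $C$ absorbing precisely the ambiguity that Proposition~\ref{prop:Laurent_bounded} already isolated.

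\medskip

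\noindent For \emph{uniqueness}, I would argue that if $T(\varphi_1) + T(\varphi_2) + H(\psi_1) + H(\psi_2)$ admits two such decompositions, their difference is a bounded operator that is simultaneously a one-sided checkerboard matrix (from the Toeplitz side) and a bounded Hankel matrix with $c_0$ symbol (from the Hankel side), under the constraint $H(\psi_2) = -T(\varphi_2)$. Since an $M^p$-Toeplitz symbol $\varphi_1$ is uniquely determined by the operator it generates (by Proposition~\ref{prop:Laurent.discrete} applied after the two-sided passage), and a bounded Hankel matrix with symbol in $c_0(\N)$ is determined by its entries, the decomposition is forced to be unique; the checkerboard piece $\varphi_2$ is then pinned down by $\psi_2$ through $H(\psi_2) = -T(\varphi_2)$.
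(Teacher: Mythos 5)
Your strategy for the Toeplitz part — conjugate by shifts, extract an elementwise limit, and feed the result into Proposition~\ref{prop:Laurent_bounded} via Lemma~\ref{lem:elementwise_convergence_bounded} — is exactly the paper's, and that half of your argument is sound (modulo one technicality: the Hankel entries of $V^{-n}P^*APV^n$ are $\psi_{j+k+2n+1}$, whose indices run off to infinity, so for merely bounded $\psi$ you must pass to a subsequence to get elementwise convergence; the Toeplitz entries $\varphi_{j-k}$ are constant in $n$ and cause no trouble). The genuine gap is in the Hankel part. The shift limit does \emph{not} produce a bounded operator $L(\varphi)+L(\tilde\psi)J$ with $\tilde\psi$ extending $\psi$; it produces $L(\varphi)+L(\nu)J$ where $\nu$ is a \emph{partial limit of $\psi$ at infinity} along the chosen subsequence. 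The entire finite-index content of $\psi$ is washed out. Consequently the symbol $b$ you obtain from Proposition~\ref{prop:Laurent_bounded} describes $\nu$, not $\psi$, and your recipe $\psi_1=(\hat b)|_{\N}$ is simply wrong: whenever $\psi\in c_0(\N)$ one gets $\nu=0$, hence $b=0$, and your formula returns $\psi_1=0$ even when $H(\psi)\neq 0$. Note also that proving boundedness of a genuine two-sided extension $L(\tilde\psi)$ of $\psi$ would amount to characterizing bounded Hankel operators on $\ell^p(\N_0)$ for $p\notin\{1,2,\infty\}$, which the paper points out is an open problem; the theorem deliberately asserts only that $H(\psi_1)$ is a bounded \emph{Hankel matrix}, not that $\psi_1$ comes from an $M^p$-function.

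The repair is the paper's second step: use the limit only to split $\varphi=\varphi_1+\varphi_2$ with $L(\varphi_1)=L(\hat a)$ bounded ($a\in M^p$) and $L(\varphi_2)$ checkerboard, then return to the one-sided level and define the $\psi$-decomposition by subtraction. Since $T(\varphi_2)$ is a one-sided checkerboard matrix it is in particular a Hankel matrix, so one sets $H(\psi_2):=-T(\varphi_2)$ and $\psi_1:=\psi-\psi_2$; then $H(\psi_1)=H(\psi)+T(\varphi_2)=A-T(\varphi_1)$ is bounded, and $\psi_1\in c_0(\N)$ follows by inspecting the first column (or, for $p=\infty$, the row sums) of the bounded matrix $H(\psi_1)$. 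Uniqueness then drops out of the single observation that the only bounded checkerboard matrix is $0$; your uniqueness paragraph gestures at several facts but never isolates this one-line argument, which is all that is needed.
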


\begin{proof}
Let $P \from \ell^p(\Z) \to \ell^p(\N_0)$ be the canonical projection and $P^* \from \ell^p(\N_0) \to \ell^p(\Z)$ the inclusion. For $n \in \N$, consider $V^{-n}P^*APV^n$ and choose an increasing sequence $(n_k)_{k \in \N}$ such that 
$$
	\left(V^{-n_k}P^*APV^{n_k}\right)_{k \in \N}
$$ 
converges elementwise.
Clearly, the limit is of the form $L(\varphi) + L(\nu)J$ with $\nu \in l^{\infty}(\Z)$. Since
\[
\norm{V^{-n_k}P^*APV^{n_k}} \leq \norm{A}
\]
for all $k \in \N$, $L(\varphi) + L(\nu)J$ is again a bounded operator (see Lemma \ref{lem:elementwise_convergence_bounded}) 
and we can apply Proposition \ref{prop:Laurent_bounded}. Therefore, $L(\varphi) = L(\varphi_1) + L(\varphi_2)$, where $L(\varphi_1)$ is bounded and $L(\varphi_2)$ is a checkerboard matrix. This implies that $T(\varphi_1)$ is bounded, $\varphi_1$ is the Fourier sequence of an $M^p$-function by Proposition \ref{prop:Laurent_bounded} and $T(\varphi_2)$ is a checkerboard matrix. Using $H(\psi) = (T(\varphi) + H(\psi)) - T(\varphi_1) - T(\varphi_2)$, we get the decomposition for $\psi$. The uniqueness follows from the obvious fact that the only bounded checkerboard matrix is $0$. Finally notice that if $H(\psi_1)$ is a bounded Hankel matrix,
then $\psi_1\in c_0(\N)$, which can be seen by considering the first row or column.
\end{proof}

\begin{rem} \label{rem:Hankel_bounded}
(a) As a consequence, $A=T(\hat{a})+H(\psi_1)$ with $a\in M^p$ and $\psi_1\in c_0(\N)$ where $H(\psi_1)$ is a bounded Hankel matrix.

(b) Note that the precise condition for the boundedness of Hankel operators on $\ell^p(\N_0)$, $p \notin \{1,2,\infty\}$ is unknown (cf.~\cite[Open problem 2.12]{BoeSi2006}). For $p = 2$ we can use that $H(\psi_1)$ is unitarily equivalent to a Hankel operator $H(a)$ on $H^2$ and therefore $\psi_1$ is the (positive) Fourier sequence of a function $a \in L^{\infty}$ \cite[Theorem 2.11]{BoeSi2006}. For $p \in \{1,\infty\}$ it is easily seen that $H(\psi_1)$ is bounded if and only if $\psi_1 \in \ell^1(\Z)$. In that case $\psi_1$ is the Fourier sequence of a function in the Wiener algebra and $H(\psi_1)$ is also compact.
\hfill $\Box$
\end{rem}

\begin{thm}\label{thm:Toeplitz+Hankel_norm}
For $1\leq p\leq \infty$, let $A=T(\hat{a})+H(\psi_1)$ where $a\in M^p$ and $\psi_1\in c_0(\N)$ such that $H(\psi_1)$ generates a bounded Hankel matrix on $\ell^p(\N_0)$. Then, for any compact operator $K$,
\[
\max\{ \norm{a}_{M^p},\frac{1}{2}\norm{H(\psi_1)+K} \} \le \norm{A+K}\le  \norm{a}_{M^p}+\norm{H(\psi_1)+K}.
\] 
\end{thm}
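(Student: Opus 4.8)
The plan is to prove the two sides separately, and to extract the second lower bound from the first by a triangle-inequality argument that accounts for the factor $\frac12$. Throughout I use the established identity $\norm{T(\hat a)}=\norm{L(a)}=\norm{a}_{M^p}$ (Duduchava's analogue of the Brown--Halmos theorem recalled in the introduction; for $p=\infty$ recall $M^\infty=W$). The upper estimate is then immediate: writing $A+K=T(\hat a)+\bigl(H(\psi_1)+K\bigr)$ and applying the triangle inequality gives $\norm{A+K}\le\norm{T(\hat a)}+\norm{H(\psi_1)+K}=\norm{a}_{M^p}+\norm{H(\psi_1)+K}$.

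For the lower bound $\norm{a}_{M^p}\le\norm{A+K}$, the idea is to conjugate by shifts so as to strip off everything except the Toeplitz part, and then invoke Lemma~\ref{lem:elementwise_convergence_bounded}. Let $V$ be the forward shift and $W$ the backward shift on $\ell^p(\N_0)$, both contractions for every $1\le p\le\infty$, and set $B_n:=W^n(A+K)V^n$, so that $\norm{B_n}\le\norm{A+K}$. Since $(W^nXV^n)_{j,k}=X_{j+n,k+n}$ for any matrix $X$, I compute
\[
(B_n)_{j,k}=(A+K)_{j+n,k+n}=\hat a_{j-k}+(\psi_1)_{j+k+1+2n}+K_{j+n,k+n},
\]
using $T(\hat a)_{j+n,k+n}=\hat a_{j-k}$. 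The middle term tends to $0$ because $\psi_1\in c_0(\N)$. Granting that the shifted diagonal entries $K_{j+n,k+n}$ of the compact operator tend to $0$, the matrices $B_n$ converge elementwise to $T(\hat a)$, so Lemma~\ref{lem:elementwise_convergence_bounded} yields $\norm{a}_{M^p}=\norm{T(\hat a)}\le\sup_n\norm{B_n}\le\norm{A+K}$.

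The only nonroutine point — the main obstacle — is the claim that $K_{j+n,k+n}\to 0$ as $n\to\infty$ for any compact $K$, uniformly across $1\le p\le\infty$. For $1\le p<\infty$ I would argue by finite-rank approximation: given $\epsilon>0$, choose a finite-rank $F$ with $\norm{K-F}<\epsilon$; its entries have the form $F_{j+n,k+n}=\sum_{i=1}^r u^{(i)}_{j+n}\phi^{(i)}_{k+n}$ with $u^{(i)}\in\ell^p(\N_0)$ (so $u^{(i)}_{j+n}\to 0$) and $\phi^{(i)}\in\ell^q(\N_0)$ bounded, whence $F_{j+n,k+n}\to 0$, while
\[
\abs{K_{j+n,k+n}-F_{j+n,k+n}}=\bigl|\bigl((K-F)e_{k+n}\bigr)_{j+n}\bigr|\le\norm{(K-F)e_{k+n}}_p\le\norm{K-F}<\epsilon,
\]
so $\limsup_n\abs{K_{j+n,k+n}}\le\epsilon$. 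For $p=\infty$ I would pass to the pre-adjoint $K^{\triangleleft}$ on $\ell^1$, which is compact by Schauder's theorem and has the transposed matrix, and apply the $p=1$ case — exactly the reduction used in the $p=\infty$ half of Lemma~\ref{lem:elementwise_convergence_bounded}.

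Finally, the second lower bound follows from the first with no extra work, and this is where the factor $\frac12$ originates. Since $H(\psi_1)+K=(A+K)-T(\hat a)$, the triangle inequality together with the bound just proved gives
\[
\norm{H(\psi_1)+K}\le\norm{A+K}+\norm{T(\hat a)}=\norm{A+K}+\norm{a}_{M^p}\le 2\norm{A+K},
\]
and dividing by $2$ completes the proof.
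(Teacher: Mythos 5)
Your proof is correct and follows essentially the same route as the paper: conjugate $A+K$ by shifts, pass to an elementwise limit that kills the Hankel and compact parts, apply Lemma~\ref{lem:elementwise_convergence_bounded}, and then recover the bound on $\norm{H(\psi_1)+K}$ by the triangle inequality. The only (harmless) variations are that you stay on $\ell^p(\N_0)$ and so invoke Duduchava's identity $\norm{T(\hat a)}=\norm{L(a)}=\norm{a}_{M^p}$ where the paper instead embeds into $\ell^p(\Z)$ to obtain $L(\hat a)$ directly, and that you dispose of the compact term by finite-rank approximation (plus the pre-adjoint trick for $p=\infty$) where the paper uses weak convergence of the shifts.
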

\begin{proof}
{}Proceeding as in the last part of the proof of Proposition \ref{prop:Laurent_bounded}, we observe that the elementwise limit of $V^{-n} P^*AP V^n$ is $L(\hat{a})$ since $\psi_1\in c_0(\N)$. Furthermore, the elementwise limit of $V^{-n}P^* KPV^n$ is zero. Indeed, this is clear for $1<p<\infty$ as the operators $V^{\pm n}\to 0$ weakly as $n\to\infty$. For $p=\infty$ notice that, for fixed $k$,  the sequence $V^ne_k\to 0$ converges weakly on the space $\ell^\infty(\Z)$, hence $\norm{KPV^ne_k}\to0$. Therefore $\langle V^{-n} P^* K PV^n e_k,e_j\rangle\to 0$ for each fixed $k,j\in\Z$. The case $p=1$ is reduced to the case $p=\infty$ by passing to the adjoint and noticing that $\langle V^{-n} P^* K PV^n e_k,e_j\rangle= \langle  e_k, V^{-n} P^* K^* PV^ne_j\rangle \to 0$
by the previous argument.

Therefore, $V^{-n} P^*(A+K)P V^n\to L(\hat{a})$ elementwise, and  from Lemma \ref{lem:elementwise_convergence_bounded} we obtain
$\norm{a}_{M^p}=\norm{L(\hat{a})}\le \norm{A+K}$.
To proceed consider
$$
\norm{H(\psi_1)+K}\le \norm{A+K}+\norm{T(\hat{a})}\le  \norm{A+K}+\norm{a}_{M^p}\le 2\norm{A+K},
$$ 
which settles the lower estimate. The upper estimate is trivial.
\end{proof}

\begin{cor}
Let $1\leq p\leq \infty$, $\varphi \in \ell^{\infty}(\Z)$, $\psi \in \ell^{\infty}(\N)$ and assume that $A := T(\varphi) + H(\psi)$ generates a compact operator on $\ell^p(\N_0)$. Then $T(\varphi)$ is a checkerboard matrix and $\psi = \psi_1 + \psi_2$, where $H(\psi_1)$ is a compact Hankel matrix with $\psi_1\in c_0(\N)$ and $H(\psi_2) = -T(\varphi)$. Moreover, the decomposition of $\psi$ is unique.
\end{cor}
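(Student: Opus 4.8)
The plan is to reduce the corollary entirely to the two theorems already established, so the work is light. First I would note that a compact operator is in particular bounded, so Theorem~\ref{thm:Toeplitz+Hankel_bounded} applies to $A = T(\varphi)+H(\psi)$: it produces the decompositions $\varphi = \varphi_1+\varphi_2$ and $\psi = \psi_1+\psi_2$ in which $T(\varphi_2)$ is a checkerboard matrix, $H(\psi_2) = -T(\varphi_2)$, and $H(\psi_1)$ is a bounded Hankel matrix with $\psi_1\in c_0(\N)$. Writing $\varphi_1 = \hat a$ with $a\in M^p$, Remark~\ref{rem:Hankel_bounded}(a) gives the clean form $A = T(\hat a)+H(\psi_1)$. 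All that remains is to upgrade these "bounded" conclusions to "compact," and the only genuine point to settle is that the Toeplitz part disappears, i.e.\ that $a = 0$.

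The key idea is to feed $A$ itself back into Theorem~\ref{thm:Toeplitz+Hankel_norm} as the compact perturbation. Since $A$ is compact by hypothesis, so is $K := -A$, and $K$ is therefore an admissible choice in Theorem~\ref{thm:Toeplitz+Hankel_norm} applied to $A = T(\hat a)+H(\psi_1)$. The lower estimate then collapses, because $\norm{A+K} = \norm{A-A} = 0$, yielding
\[
\max\set{\norm{a}_{M^p},\ \tfrac12\norm{H(\psi_1)+K}} \le \norm{A+K} = 0.
\]
From the first entry we get $\norm{a}_{M^p} = 0$, hence $a = 0$, so $\varphi_1 = \hat a = 0$ and $\varphi = \varphi_2$; consequently $T(\varphi) = T(\varphi_2)$ is a checkerboard matrix. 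Either from the second entry, which forces $H(\psi_1) = -K = A$, or simply from $A = T(\hat a)+H(\psi_1) = H(\psi_1)$, we conclude that $H(\psi_1) = A$ is compact, with $\psi_1\in c_0(\N)$. Finally $H(\psi_2) = -T(\varphi_2) = -T(\varphi)$, exactly as asserted.

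For uniqueness I would argue exactly as in the proof of Theorem~\ref{thm:Toeplitz+Hankel_bounded}. If $\psi = \psi_1+\psi_2 = \psi_1'+\psi_2'$ are two decompositions of the required type, then $H(\psi_1-\psi_1') = H(\psi_2'-\psi_2)$, whose left-hand side is a bounded Hankel matrix and whose right-hand side is a difference of checkerboard matrices, hence checkerboard. Since the only bounded checkerboard matrix is $0$, we obtain $\psi_1 = \psi_1'$ and $\psi_2 = \psi_2'$.

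I do not anticipate a real obstacle: the entire analytic content is already packaged inside Theorem~\ref{thm:Toeplitz+Hankel_norm}, and the single thing that has to be spotted is the substitution $K = -A$, which annihilates $\norm{A+K}$ and thereby forces the Toeplitz symbol to vanish. The only steps needing a word of care are the trivial verification that $-A$ is compact (immediate from the hypothesis) and the observation, already recorded in Theorem~\ref{thm:Toeplitz+Hankel_bounded}, that a bounded Hankel matrix has symbol in $c_0(\N)$.
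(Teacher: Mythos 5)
Your argument is correct and is essentially identical to the paper's own proof: both invoke Theorem~\ref{thm:Toeplitz+Hankel_bounded} for the decomposition and then apply Theorem~\ref{thm:Toeplitz+Hankel_norm} with $K=-A$ to force $a=0$ and the compactness of $H(\psi_1)$. Your added uniqueness argument, via the fact that the only bounded checkerboard matrix is $0$, is also the intended one.
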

\begin{proof}
We know from Theorem \ref{thm:Toeplitz+Hankel_bounded} that we can decompose $\varphi=\hat{a}+\varphi_2$ with $a\in M^p$
and $\psi=\psi_1+\psi_2$ such that $H(\psi_2)=-T(\varphi_2)$ is checkerboard and $H(\psi_1)$ is bounded. So, $A=T(\hat{a})+H(\psi_1)$. Now apply
Theorem \ref{thm:Toeplitz+Hankel_norm} with $K=-A$ to conclude that $a=0$ and $H(\psi_1)=-K$ is compact.
\end{proof}

\begin{rem}
As for the boundedness, we remark that the precise condition for the compactness of Hankel operators on $\ell^p(\N_0)$, $p \notin \{1,2,\infty\}$ is unknown (cf.~\cite[Open problem 2.56]{BoeSi2006}). For $p = 2$ the Hankel operator $H(\psi_1)$ is compact if and only if $\psi$ is the (positive) Fourier sequence of a function $a \in C(\T) + \overline{H^{\infty}}$; see~\cite[Theorem 2.54]{BoeSi2006}. For $p \in \{1,\infty\}$ see Remark \ref{rem:Hankel_bounded}.
\hfill $\Box$
\end{rem}

Finally, we add a characterization involving displacement relations that also nicely summarizes this section. Statement (i) provides an intrinsic characterization of bounded operators of Toeplitz+Hankel form. The underlying ``displacement transform'' shown in \eqref{TH.displacement} is well-known in the theory of finite Toeplitz+Hankel matrices, see~\cite{HR1, HR2}.

\begin{thm}\label{main1}
For $1\le p\le\infty$, the following statements are equivalent:
\begin{itemize}
\item[{\rm (i)}]
$A$ is a bounded linear operator on $\ell^p(\N_0)$ with matrix representation $A=(A_{j,k})_{j,k\in\N_0}$ satisfying
the displacement relations
\begin{equation}\label{TH.displacement}
A_{j-1,k}+A_{j+1,k}-A_{j,k-1}-A_{j,k+1}=0 \quad (j,k \in \N).
\end{equation}
\item[{\rm (ii)}]
$A$ is a bounded linear operator on $\ell^p(\N_0)$ whose matrix representation can be written as $T(\varphi)+H(\psi)$
for some $\varphi\in \ell^\infty(\Z)$ and $\psi\in \ell^\infty(\N)$.
\item[{\rm (iii)}]
$A=T(\hat{a})+H(\varrho)$ for some $a\in M^p$ and $\varrho\in c_0(\N)$ such that $H(\varrho)$ is bounded on $\ell^p(\N_0)$.
\end{itemize}
Furthermore, the function $a$ and the sequence $\varrho$ in {\rm (iii)} are uniquely determined,  and 
\begin{align*}
\norm{T(\hat{a})+H(\varrho)}  & \simeq
\max\{ \norm{a}_{M^p},\norm{H(\varrho)} \},
\\[1ex]
\norm{T(\hat{a})+H(\varrho)}_{\mathrm{ess}}  &\simeq
\max\{ \norm{a}_{M^p},\norm{H(\varrho)}_{\mathrm{ess}} \},
\end{align*}
where ``$\simeq$'' stands for equivalence of norms.
\end{thm}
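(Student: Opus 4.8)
The plan is to prove the cycle (ii)\,$\Rightarrow$\,(i), (i)\,$\Rightarrow$\,(ii) and (ii)\,$\Leftrightarrow$\,(iii), and then to read off uniqueness and the two norm equivalences from results already established. Three of these are routine. The implication (ii)\,$\Rightarrow$\,(i) is a direct substitution: inserting $A_{j,k}=\varphi_{j-k}+\psi_{j+k+1}$ into \eqref{TH.displacement}, the Toeplitz part contributes $\varphi_{j-k-1}+\varphi_{j-k+1}-\varphi_{j-k+1}-\varphi_{j-k-1}=0$ and the Hankel part $\psi_{j+k}+\psi_{j+k+2}-\psi_{j+k}-\psi_{j+k+2}=0$. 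The equivalence (ii)\,$\Leftrightarrow$\,(iii) needs no new work: (iii)\,$\Rightarrow$\,(ii) holds because $\hat a\in c_0(\Z)\subseteq\ell^\infty(\Z)$ and $\varrho\in c_0(\N)\subseteq\ell^\infty(\N)$, while (ii)\,$\Rightarrow$\,(iii) is precisely Theorem~\ref{thm:Toeplitz+Hankel_bounded} together with Remark~\ref{rem:Hankel_bounded}(a), which rewrite a bounded $T(\varphi)+H(\psi)$ as $T(\hat a)+H(\psi_1)$ with $a\in M^p$ and $\psi_1\in c_0(\N)$; one then sets $\varrho=\psi_1$.

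The substance of the theorem is (i)\,$\Rightarrow$\,(ii), which I would prove by reconstructing the two symbols from the matrix entries. Passing to diagonal/anti-diagonal coordinates $d=j-k$, $s=j+k$ and writing $B_{d,s}=A_{j,k}$, the relation \eqref{TH.displacement} becomes $B_{d-1,s-1}+B_{d+1,s+1}-B_{d+1,s-1}-B_{d-1,s+1}=0$, which says that the difference $B_{d,s+1}-B_{d,s-1}$ does not depend on $d$; denote its common value by $f(s)$. Fixing two gauge constants, I would then define $(\tilde\psi_s)$ by $\tilde\psi_{s+1}-\tilde\psi_{s-1}=f(s)$ and set $\varphi_d:=B_{d,\abs{d}}-\tilde\psi_{\abs{d}}$ (the reference point $s=\abs{d}$ being a boundary entry $A_{d,0}$ or $A_{0,-d}$). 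A single induction step in $s$ then gives $B_{d,s}=\varphi_d+\tilde\psi_s$ at every lattice point, i.e.\ $A=T(\varphi)+H(\psi)$ with $\psi_m:=\tilde\psi_{m-1}$. Boundedness of $A$ enters only at the very end: since $\abs{A_{j,k}}\le\norm{A}$, the identities $\varphi_0+\tilde\psi_s=A_{s/2,s/2}$ for even $s$ and $\varphi_1+\tilde\psi_s=A_{(s+1)/2,(s-1)/2}$ for odd $s$ force $\tilde\psi\in\ell^\infty$, whence $\varphi_d=A_{j,k}-\tilde\psi_s\in\ell^\infty$ as well. I expect this reconstruction to be the main obstacle: one must globalize the purely local relation \eqref{TH.displacement} to an additive splitting and, crucially, fix the gauge so that boundedness of $A$ passes to both symbols; the remaining implications only repackage earlier theorems.

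For uniqueness in (iii), suppose $T(\hat a)+H(\varrho)=T(\hat a')+H(\varrho')$. Then $T(\widehat{a-a'})=H(\varrho'-\varrho)$ is simultaneously Toeplitz and Hankel; equating $\mu_{j-k}=\nu_{j+k+1}$ forces $\mu$ to be $2$-periodic, so this matrix is a checkerboard matrix, and the only bounded checkerboard matrix is $0$ (as already used in the proof of Theorem~\ref{thm:Toeplitz+Hankel_bounded}). Hence $a=a'$ and $\varrho=\varrho'$. Finally, both norm equivalences follow from Theorem~\ref{thm:Toeplitz+Hankel_norm}. Taking $K=0$ there gives $\max\{\norm{a}_{M^p},\frac{1}{2}\norm{H(\varrho)}\}\le\norm{A}\le\norm{a}_{M^p}+\norm{H(\varrho)}$, and since $\max\{\alpha,\frac{1}{2}\beta\}\ge\frac{1}{2}\max\{\alpha,\beta\}$ and $\alpha+\beta\le2\max\{\alpha,\beta\}$, this already yields an equivalence of norms. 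For the essential norm, I would take the infimum over compact $K$ of the two inequalities in Theorem~\ref{thm:Toeplitz+Hankel_norm}, using $\inf_K\max\{\alpha,\frac{1}{2}\norm{H(\varrho)+K}\}=\max\{\alpha,\frac{1}{2}\norm{H(\varrho)}_{\mathrm{ess}}\}$, to obtain $\max\{\norm{a}_{M^p},\frac{1}{2}\norm{H(\varrho)}_{\mathrm{ess}}\}\le\norm{A}_{\mathrm{ess}}\le\norm{a}_{M^p}+\norm{H(\varrho)}_{\mathrm{ess}}$, which is again an equivalence of norms.
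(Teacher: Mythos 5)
Your proposal is correct and follows essentially the same route as the paper: the key implication (i)$\Rightarrow$(ii) is handled by reconstructing the Hankel symbol from (anti-)diagonal entries and showing via the iterated displacement relation that the remainder is Toeplitz (your $(d,s)=(j-k,j+k)$ change of coordinates is only a cosmetic repackaging of the paper's computation), while (ii)$\Rightarrow$(iii), uniqueness, and the norm equivalences are read off from Theorem~\ref{thm:Toeplitz+Hankel_bounded} and Theorem~\ref{thm:Toeplitz+Hankel_norm} exactly as in the paper.
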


\begin{proof}
The implications (iii)$\Rightarrow$(ii)$\Rightarrow$(i) are obvious.

(i)$\Rightarrow$(ii):
Choose
\[\psi_{j} = \begin{cases} A_{\frac{j-1}{2},\frac{j-1}{2}} & \text{if $j$ is odd}, \\ A_{\frac{j-2}{2},\frac{j}{2}} & \text{if $j$ is even}. \end{cases}\]
Let $T = A - H(\psi)$. It remains to show that $T$ is a Toeplitz matrix. Assume that $j+k$ is even. Iterating the displacement relations gives
\begin{equation} \label{eq:iterated_displacement}
A_{j+1,k+1} - A_{j,k} = A_{j,k+2} - A_{j-1,k+1} = \ldots = A_{j-n+1,k+n+1} - A_{j-n,k+n}
\end{equation}
for all $n \in \{-k,\ldots,j\}$. Using \eqref{eq:iterated_displacement} with $n = \frac{j-k}{2}$, we get
\begin{align*}
T_{j+1,k+1} - T_{j,k} &= A_{j+1,k+1} - A_{j,k} - \psi_{j+k+3} + \psi_{j+k+1}\\
&= A_{j+1,k+1} - A_{j,k} - A_{\frac{j+k}{2}+1,\frac{j+k}{2}+1} + A_{\frac{j+k}{2},\frac{j+k}{2}}\\
&= 0.
\end{align*}
Similarly, if $j+k$ is odd, then
\begin{align*}
T_{j+1,k+1} - T_{j,k} &= A_{j+1,k+1} - A_{j,k} - \psi_{j+k+3} + \psi_{j+k+1}\\
&= A_{j+1,k+1} - A_{j,k} - A_{\frac{j+k+1}{2},\frac{j+k+3}{2}} + A_{\frac{j+k-1}{2},\frac{j+k+1}{2}}\\
&= 0,
\end{align*}
using \eqref{eq:iterated_displacement} with $n = \frac{j-k+1}{2}$. Hence $T = T(\varphi)$ for some bi-infinite sequence $\varphi = (\varphi_j)_{j \in \Z}$. Note that $\psi$ and $\varphi$ are bounded by construction because $|A_{j,k}| \leq \|A\|$ for all $j,k \in \N_0$.

(ii)$\Rightarrow$(iii):
From Theorem \ref{thm:Toeplitz+Hankel_bounded} we conclude that 
$A=T(\varphi)+H(\psi)$ can be written as $A=T(\hat{a})+H(\varrho)$ with $a\in M^p$, $\varrho\in c_0(\N)$ and $H(\varrho)$ being bounded. 

The uniqueness of $a$ and $\varrho$ follows from the uniqueness in Theorem \ref{thm:Toeplitz+Hankel_bounded},
and the statements about the norms follow from Theorem \ref{thm:Toeplitz+Hankel_norm}.
\end{proof}

We also have the following analogue for operators on $\ell^p(\Z)$.

\begin{prop} \label{prop:unique_decomposition-sequence}
For $1\le p\le \infty$, let $A$ be a bounded linear operator on $\ell^p(\Z)$ with matrix representation $A=(A_{j,k})_{j,k\in\Z}$ and
\begin{equation}\label{TH.displacement.infinite}
A_{j-1,k}+A_{j+1,k}-A_{j,k-1}-A_{j,k+1}=0 \quad (j,k\in\Z).
\end{equation}
Then $A=L(\hat{a})+L(\hat{b})J$ for some $a,b\in M^p$ and this decomposition is unique.
\end{prop}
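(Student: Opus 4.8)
The plan is to first rewrite the displacement relation \eqref{TH.displacement.infinite} as the explicit matrix identity $A = L(\varphi) + L(\psi)J$ for suitable bounded sequences, and then feed this into Proposition~\ref{prop:Laurent_bounded}. For orientation, note that \eqref{TH.displacement.infinite} is exactly the formal identity $(V+V^{-1})A = A(V+V^{-1})$ with $V = L(e_1)$, so the content is that every bounded matrix commuting with $V+V^{-1}$ is a Laurent-plus-flipped-Laurent matrix; this commutator reading is only motivation and will not be used directly.

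First I would reduce $A$ to the form $L(\varphi)+L(\psi)J$, copying the implication (i)$\Rightarrow$(ii) of Theorem~\ref{main1}. Define $\psi=(\psi_n)_{n\in\Z}$ by the same diagonal prescription used there (odd $n$ reading off a main-diagonal entry of $A$, even $n$ a neighbouring one), and set $T := A - L(\psi)J$, where $(L(\psi)J)_{j,k} = \psi_{j+k+1}$. The key observation is that the iterated displacement identity \eqref{eq:iterated_displacement} now holds for \emph{every} $n\in\Z$, since on the bi-infinite index set no entry ever leaves the matrix; this is what makes the argument cleaner than in the half-line case, where the range $n\in\{-k,\dots,j\}$ was forced. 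Choosing $n=\tfrac{j-k}{2}$ when $j+k$ is even and $n=\tfrac{j-k+1}{2}$ when $j+k$ is odd, the computation of Theorem~\ref{main1} gives $T_{j+1,k+1}=T_{j,k}$ for all $j,k\in\Z$, so $T=L(\varphi)$ is a Laurent matrix. Both $\varphi$ and $\psi$ are bounded because $|A_{j,k}|=|\langle Ae_k,e_j\rangle|\le\norm{A}$.

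Next, since $A$ is bounded by hypothesis and $A=L(\varphi)+L(\psi)J$ with $\varphi,\psi\in\ell^\infty(\Z)$, Proposition~\ref{prop:Laurent_bounded} supplies $a,b\in M^p$ and a single checkerboard matrix $C$ with $L(\varphi)=L(\hat a)+C$ and $L(\psi)=L(\hat b)-CJ$. Substituting and using $J^2=I$,
\[
A = L(\hat a) + C + \bigl(L(\hat b)-CJ\bigr)J = L(\hat a) + L(\hat b)J + C - C = L(\hat a)+L(\hat b)J,
\]
so the two checkerboard contributions cancel and the asserted decomposition drops out. The point requiring care here is precisely that Proposition~\ref{prop:Laurent_bounded} produces the \emph{same} $C$ in both lines; this cancellation is what forces the clean form with no leftover checkerboard term.

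Finally, for uniqueness I would suppose $L(\hat a)+L(\hat b)J=L(\hat{a}')+L(\hat{b}')J$ and set $c=a-a'$, $d=b'-b$, turning this into $\hat c_{j-k}=\hat d_{j+k+1}$ for all $j,k\in\Z$. Fixing $m=j-k$ and letting $j+k$ run over all integers of the appropriate parity shows that $\hat d_n$ is constant, equal to $\hat c_m$, along a full parity class of indices $n$. Since $a',b'\in M^p\subseteq L^\infty$ forces $\hat c,\hat d\in c_0(\Z)$, letting $|n|\to\infty$ gives $\hat c_m=0$ for every $m$, whence $\hat c\equiv0$ and then $\hat d\equiv0$; thus $a=a'$ and $b=b'$. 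The only genuinely computational step is the reduction in the second paragraph, and even that is identical to work already done for Theorem~\ref{main1}; I expect no real obstacle beyond the bookkeeping of the two parity cases and the verification that the checkerboard cancels.
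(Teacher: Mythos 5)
Your proposal is correct and follows exactly the route the paper intends: its proof of this proposition is just ``Analogous to Theorem~\ref{main1}'', i.e.\ use the displacement relation to write $A=L(\varphi)+L(\psi)J$ as in (i)$\Rightarrow$(ii) of that theorem and then invoke Proposition~\ref{prop:Laurent_bounded}, which is what you do. Your observations that the iterated displacement identity now holds for all $n\in\Z$, that the two checkerboard contributions from Proposition~\ref{prop:Laurent_bounded} cancel, and that uniqueness follows from $\hat{c},\hat{d}\in c_0(\Z)$ are all accurate.
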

\begin{proof}
Analogous to Theorem~\ref{main1}.
\end{proof}

\section{Hardy spaces}\label{Hardy spaces}

In this section we consider Toeplitz+Hankel operators on Hardy spaces $H^p$ ($1<p<\infty$) and prove analogous characterizations for their boundedness and compactness as in the previous section. Unfortunately, we are currently unable to deal with Toeplitz+Hankel operators on the Hardy space $H^1$ but conjecture that also in this case the boundedness (compactness) of $T(a)+H(b)$ implies that both $T(a)$ and $H(b)$ are bounded (compact). We will also consider multiplication operators with flip $M(a)+M(c)J$ on $L^p$. The corresponding results are valid for $1\le p<\infty$.

For $j\in\Z$, define $\chi_j \from \T \to \C$ by $\chi_j(z) = z^j$ and let $U^j = M(\chi_j)$.

\begin{lem} \label{lem:a_bounded}
Let $a \from \T \to \C$ be a measurable function and assume that there is a constant $C > 0$ such that
\[\norm{a(1-\chi_{2l})}_{\infty} \leq C\]
for all $l \in \N$. Then $a \in L^{\infty}$ and $\|a\|_{\infty}\le C/2$.
\end{lem}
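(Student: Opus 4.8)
The plan is to convert the family of uniform bounds $\norm{a(1-\chi_{2l})}_\infty \le C$ into a single pointwise bound $\abs{a(z)}\le C/2$ valid almost everywhere, by exploiting the fact that the powers $z^{2l}$ become dense on $\T$ for almost every $z$.

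First I would pass to pointwise inequalities. For each fixed $l\in\N$, the hypothesis $\norm{a(1-\chi_{2l})}_\infty \le C$ means precisely that $\abs{a(z)}\,\abs{1-z^{2l}} \le C$ for all $z$ outside some null set $N_l\subseteq\T$. Since only countably many values of $l$ occur, $N := \bigcup_{l\in\N} N_l$ is again a null set, and for every $z\in\T\setminus N$ we have simultaneously
\[
\abs{a(z)}\,\abs{1-z^{2l}} \le C \qquad\text{for all } l\in\N.
\]

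Next I would compute $\sup_{l}\abs{1-z^{2l}}$ for almost every $z$. The roots of unity form a countable, hence null, subset of $\T$; enlarging $N$ by this set, I may assume that for $z\in\T\setminus N$ the element $w:=z^2$ is not a root of unity. Then the forward orbit $\{w^l : l\in\N\}$ is dense in $\T$ (a classical fact about irrational rotations: the closure of the cyclic group generated by a non-torsion point of $\T$ is a closed infinite subgroup, hence all of $\T$, and the forward orbit is dense as well). Since $\sup_{\zeta\in\T}\abs{1-\zeta}=2$, density yields
\[
\sup_{l\in\N}\abs{1-z^{2l}} = 2 \qquad (z\in\T\setminus N).
\]
Taking the supremum over $l$ in the previous display then gives $2\abs{a(z)} \le C$, i.e. $\abs{a(z)}\le C/2$, for every $z\in\T\setminus N$. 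As $N$ is null, this shows $a\in L^\infty$ with $\norm{a}_\infty \le C/2$.

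The only genuinely delicate point is the interchange of quantifiers: the hypothesis supplies, for each $l$ separately, an inequality that may fail on an $l$-dependent null set, whereas the conclusion requires all these inequalities to hold off one fixed null set. This is resolved by the countable union $N=\bigcup_l N_l$, and it is essential that $l$ ranges over a countable set. The second ingredient, the density of $\{z^{2l}\}$ for non-exceptional $z$, is where the sharp constant $2$ enters and thereby produces the factor $C/2$ in the bound.
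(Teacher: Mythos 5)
Your proof is correct and follows essentially the same route as the paper's: restrict to the full-measure set of $z$ for which $z^2$ is not a root of unity, use density of the powers $z^{2l}$ to get $\sup_l\abs{1-z^{2l}}=2$, and divide. You are in fact somewhat more careful than the paper in spelling out the countable union of null sets and the density of the \emph{forward} orbit, both of which the paper leaves implicit.
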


\begin{proof}
Let $z \in \T$ such that $z^{2l} \neq 1$ for every $l \in \Z\setminus\{0\}$. Then $\{z^{2l}:l\in\Z\}$ is dense in $\T$, and the assumption implies
\[\abs{a(z)} \leq \inf\limits_{l \in \Z} \frac{C}{\abs{1-z^{2l}}} = \frac{C}{\sup\limits_{l \in \Z} \abs{1-z^{2l}}} =  \frac{C}{2}.\]
Since almost all $z\in\T$ satisfy the afore-mentioned condition, the assertion follows.
\end{proof}

For measurable functions $f,g \from \T \to \C$ such that $f \cdot g \in L^1$ we define
\[
\sp{f}{g} := \frac{1}{2\pi}\int_{\T} f(e^{i\theta})\overline{g(e^{i\theta})} \, \mathrm{d}\theta.
\]
For $a \in L^{\infty}$ denote by $M(a)$ the operator of multiplication by $a$. We recall the following useful characterization of multiplication operators on $L^p$. While it is stated for $1<p<\infty$ in \cite[Proposition~2.2]{BoeSi2006}, its proof remains valid also for $p=1$.

\begin{prop}\label{multiplication characterization}
For $1\le p<\infty$, let $A$ be a bounded linear operator on $L^p$, and suppose that there are complex numbers $\varphi_j$ such that
$$	
	\langle A\chi_k, \chi_j\rangle = \varphi_{j-k}
$$
for $j,k\in \Z$, that is, $A$ is of Laurent structure. Then there is an $a\in L^\infty$ such that $A = M(a)$, $a_j = \varphi_j$ for all $j\in \Z$, and
$$
	\|M(a)\| = \|a\|_\infty.
$$
\end{prop}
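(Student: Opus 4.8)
The plan is to produce the multiplier as the image of the constant function, $a := A\chi_0$, show that $A$ agrees with pointwise multiplication by $a$ on the dense subspace of trigonometric polynomials, and then upgrade the a priori membership $a\in L^p$ to $a\in L^\infty$; the last step is where the real work lies. Since $L^p\subseteq L^1$ on $\T$ for $p\ge 1$, the function $a$ lies in $L^1$ and has Fourier coefficients $\langle a,\chi_j\rangle = \langle A\chi_0,\chi_j\rangle = \varphi_j$, which will also deliver the stated identity $a_j=\varphi_j$ at the end.

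First I would show $A\chi_k = a\chi_k$ for every $k\in\Z$. Both sides lie in $L^1$, so it suffices to compare Fourier coefficients. On one hand $\langle A\chi_k,\chi_j\rangle = \varphi_{j-k}$ by the Laurent hypothesis; on the other hand $\langle a\chi_k,\chi_j\rangle = \langle a,\chi_{j-k}\rangle = \varphi_{j-k}$ since $z^k\overline{z^j} = \overline{z^{j-k}}$ on $\T$. By uniqueness of Fourier coefficients on $L^1(\T)$ the two functions coincide, and by linearity $Aq = aq$ for every trigonometric polynomial $q$. This is the step that encodes the Laurent structure, and it is exactly where the totality of $\set{\chi_j}$ and density of trigonometric polynomials are used.

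The main obstacle is to deduce $a\in L^\infty$. From the previous step, $\norm{aq}_p = \norm{Aq}_p \le \norm{A}\,\norm{q}_p$ for all trigonometric polynomials $q$, so multiplication by $a$ is bounded by $\norm{A}$ on a dense subspace of $L^p$ and extends to a bounded operator $B$ on $L^p$. I would first confirm that $B$ is genuine multiplication: approximating $f\in L^p$ by trigonometric polynomials $q_n\to f$ and passing to an a.e.\ convergent subsequence, $aq_n\to af$ a.e.\ while $aq_n = Bq_n\to Bf$ in $L^p$, forcing $Bf = af$ a.e. Once $M(a)$ is known to be bounded on $L^p$ with $\norm{M(a)}\le\norm{A}$, essential boundedness follows by testing against indicators: if $\abs{a}\ge \norm{A}+\epsilon$ on a set $E$ of positive measure, then $f=\mathbf{1}_E$ gives $\norm{af}_p \ge (\norm{A}+\epsilon)\norm{f}_p$, contradicting the bound; hence $a\in L^\infty$ with $\norm{a}_\infty\le\norm{A}$.

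Finally, $A$ and $M(a)$ are bounded operators agreeing on the dense set of trigonometric polynomials, so $A = M(a)$. The identity $\norm{M(a)} = \norm{a}_\infty$ is the standard computation for multiplication operators on $L^p$ (the inequality ``$\le$'' is H\"older, while ``$\ge$'' again uses normalized indicators of sets where $\abs{a}$ is close to $\esssup\abs{a}$), and the Fourier coefficients were already identified as $a_j=\varphi_j$. I expect the only places where $1\le p<\infty$ (rather than $1<p<\infty$) enters are the density of trigonometric polynomials and the validity of the indicator-function test, both of which hold equally at $p=1$; this is precisely why the argument cited from \cite[Proposition~2.2]{BoeSi2006} carries over unchanged.
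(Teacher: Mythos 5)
Your proof is correct, and it is essentially the standard argument: the paper itself gives no proof of this proposition, only the remark that it is \cite[Proposition~2.2]{BoeSi2006} and that the proof there carries over to $p=1$, and your route (set $a:=A\chi_0$, match Fourier coefficients to get $Aq=aq$ on trigonometric polynomials, then upgrade to $a\in L^\infty$ via the indicator-function test) is the same scheme. All the steps where $1\le p<\infty$ is used --- density of trigonometric polynomials, a.e.\ convergent subsequences, and the indicator test --- are correctly identified and valid at $p=1$.
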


\begin{prop} \label{prop:unique_decomposition}
For $1\le p<\infty$, let $A$ be a bounded linear operator on $L^p$ and assume that
\begin{equation} \label{eq:prop_unique_decomposition}
\sp{A\chi_{k-1}}{\chi_j} - \sp{A\chi_k}{\chi_{j-1}} - \sp{A\chi_k}{\chi_{j+1}} + \sp{A\chi_{k+1}}{\chi_j} = 0
\end{equation}
for all $j,k \in \Z$. Then $A = M(a) + M(c)J$ for some $a,c \in L^{\infty}$ and this decomposition is unique.
Furthermore,
$$
\max\{\norm{a}_{\infty},\norm{c}_\infty\}\le \norm{A}\le \norm{a}_\infty+\norm{c}_\infty.
$$
\end{prop}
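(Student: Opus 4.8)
The plan is to mirror the sequence-space argument of Proposition~\ref{prop:Laurent_bounded}, replacing the shift $V$ by the invertible isometry $U = U^1 = M(\chi_1)$ and the discrete Laurent characterization by Proposition~\ref{multiplication characterization}. The algebraic facts I would exploit are that $U$ and $J$ are invertible isometries of $L^p$ with $J^2 = I$ and $JU = U^{-1}J$ (so that $J\chi_k = \chi_{-k-1}$), which force conjugation by $U$ to fix the flip part, $UM(c)JU = M(c)J$, while $UM(a)U = M(\chi_2 a)$. Since every operator of the form $M(a)+M(c)J$ has matrix entries $\sp{M(a)\chi_k}{\chi_j}=\hat a_{j-k}$ and $\sp{M(c)J\chi_k}{\chi_j}=\hat c_{j+k+1}$, it is routine to check that such operators satisfy~\eqref{eq:prop_unique_decomposition}; the content is the converse.

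First I would extract the multiplication part. Writing $A_{j,k}=\sp{A\chi_k}{\chi_j}$, a direct computation gives $\sp{(A-UAU)\chi_k}{\chi_j}=A_{j,k}-A_{j-1,k+1}$, and the displacement relation~\eqref{eq:prop_unique_decomposition} (after shifting $k$ by one) shows this depends only on $j-k$. Thus $A-UAU$ has Laurent structure and, being bounded by $2\norm{A}$, equals $M(g)$ for some $g\in L^\infty$ by Proposition~\ref{multiplication characterization}. Telescoping then yields
\[
A-U^nAU^n=\sum_{i=0}^{n-1}U^i(A-UAU)U^i=M\!\left(\tfrac{1-\chi_{2n}}{1-\chi_2}\,g\right),
\]
so that, setting $a:=g/(1-\chi_2)$ as a measurable function defined a.e., one obtains $\norm{(1-\chi_{2n})a}_\infty=\norm{A-U^nAU^n}\le 2\norm{A}$ for every $n\in\N$.

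This is where the main obstacle lies: $a$ is only defined as a quotient by $1-\chi_2$, which vanishes at $\pm1$, so $a\in L^\infty$ is not automatic. Precisely this is what Lemma~\ref{lem:a_bounded} is designed to overcome, upgrading the uniform bounds above to $a\in L^\infty$ with $\norm{a}_\infty\le\norm{A}$. With $g=(1-\chi_2)a$ in hand, $A-UAU=M(a)-UM(a)U$, so $R:=A-M(a)$ satisfies $R=URU$, hence $R=U^nRU^n$ for all $n\in\Z$; reading this off the matrix entries shows $R_{j,k}$ depends only on $j+k$, i.e. $R$ has Hankel structure. Consequently $RJ$ has Laurent structure, so $RJ=M(c)$ for some $c\in L^\infty$ by Proposition~\ref{multiplication characterization}, and $R=M(c)J$ since $J^2=I$. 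This establishes $A=M(a)+M(c)J$.

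Finally, the norm estimates follow formally. The upper bound is $\norm{A}\le\norm{M(a)}+\norm{M(c)J}=\norm{a}_\infty+\norm{c}_\infty$, using that $J$ is an isometry and $\norm{M(a)}=\norm{a}_\infty$. For the lower bound, the identity $A-U^nAU^n=M((1-\chi_{2n})a)$ together with Lemma~\ref{lem:a_bounded} gives $\norm{a}_\infty\le\norm{A}$, and applying the same computation to $AJ=M(c)+M(a)J$ (the same structure with the roles of $a$ and $c$ interchanged) gives $\norm{c}_\infty\le\norm{AJ}=\norm{A}$. Since these lower bounds hold for \emph{arbitrary} $a,c\in L^\infty$, uniqueness is immediate: two decompositions of $A$ differ by an operator $M(a)+M(c)J=0$, whence $\max\{\norm{a}_\infty,\norm{c}_\infty\}\le 0$ and $a=c=0$.
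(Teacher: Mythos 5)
Your proposal is correct and follows essentially the same route as the paper: extract the multiplication part via the Laurent-structured operator $A-UAU$, telescope to $A-U^lAU^l=M\bigl((1-\chi_{2l})a\bigr)$, invoke Lemma~\ref{lem:a_bounded} to get $a\in L^\infty$ with $\norm{a}_\infty\le\norm{A}$, and then observe that $(A-M(a))J$ has Laurent structure, hence equals $M(c)$. The only (harmless) variation is that you deduce uniqueness from the lower norm bounds applied to a vanishing operator $M(a)+M(c)J=0$, whereas the paper argues directly on Fourier coefficients via the Riemann--Lebesgue lemma; both are valid.
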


\begin{proof}
Observe
\begin{align*}
\sp{(A-UAU)\chi_{k-1}}{\chi_j} &= \sp{A\chi_{k-1}}{\chi_j} - \sp{A\chi_k}{\chi_{j-1}}\\
&= \sp{A\chi_k}{\chi_{j+1}} - \sp{A\chi_{k+1}}{\chi_j}\\
&= \sp{(A-UAU)\chi_k}{\chi_{j+1}}
\end{align*}
for all $j,k \in \Z$, i.e., $A-UAU$ has Laurent structure and is bounded, hence has to be equal to $M(\tilde{a}_1)$ for a bounded symbol $\tilde{a}_1$ with $\norm{\tilde{a}_1}_{\infty} \leq 2\norm{A}$ by Proposition~\ref{multiplication characterization}. More generally, for $l\in\N$, consider
\begin{align*}
A-U^lAU^l &=
\sum_{j=0}^{l-1} U^j(A-UAU)U^j
=\sum_{j=0}^{l-1} M(\chi_j)M(\tilde{a}_1)M(\chi_j)=M(\tilde{a}_l)
\end{align*}
with $\tilde{a}_l(z):=(1+z^2+\dots +z^{2(l-1)})\tilde{a}_1(z)=\frac{1-z^{2l}}{1-z^2}\tilde{a}_1(z)$,
which has to satisfy the estimate $\norm{\tilde{a}_l}_{\infty} \leq 2\norm{A}$ again by Proposition~\ref{multiplication characterization}. Defining 
$$
a(z):=\frac{\tilde{a}_1(z)}{1-z^2}=\frac{\tilde{a}_l(z)}{1-z^{2l}}
$$
for almost every $z\in\T$, it follows that
\[\norm{a(1-\chi_{2l})}_{\infty} = \norm{\tilde{a}_l}_{\infty} \leq 2\norm{A}.\]
for every $l\in \N$. By Lemma \ref{lem:a_bounded}, $a$ is essentially bounded and $\norm{a}_\infty\le \norm{A}$.

Moreover,
\begin{equation} \label{eq:prop_unique_decomposition_2}
	M(a) - UM(a)U = M(a(1-\chi_2)) = M(\tilde{a}_1) = A - UAU.
\end{equation}

Now consider $B := (A - M(a))J$, which is bounded on $L^p$. Then
\[
	UBU^{-1} = U(A-M(a))JU^{-1} = U(A-M(a))UJ = (A-M(a))J = B
\]
by \eqref{eq:prop_unique_decomposition_2}, i.e., $B$ has Laurent structure. This means that there is a $c \in L^{\infty}$ such that $B = M(c)$. Hence, $A = M(a) + M(c)J$.

To show uniqueness assume that there are $\tilde{a},\tilde{c} \in L^{\infty}$ such that
\[
	M(\tilde{a}) + M(\tilde{c})J = A = M(a) + M(c)J.
\]
Then $M(a-\tilde{a}) = M(\tilde{c}-c)J$ and hence
\begin{align*}
	\sp{a-\tilde{a}}{\chi_{k}} &= \sp{M(a-\tilde{a})\chi_0}{\chi_{k}} = \sp{M(\tilde{c}-c)J\chi_0}{\chi_{k}}\\
	&= \sp{M(\tilde{c}-c)\chi_{-1}}{\chi_{k}}
	= \sp{M(\tilde{c}-c)\chi_0}{\chi_{k+1}}\\
	&= \sp{M(\tilde{c}-c)J\chi_{-1}}{\chi_{k+1}}
	= \sp{M(a-\tilde{a})\chi_{-1}}{\chi_{k+1}}\\
	&= \sp{M(a-\tilde{a})\chi_0}{\chi_{k+2}}
	= \sp{a-\tilde{a}}{\chi_{k+2}}
\end{align*}
for all $k \in \Z$. It follows $a = \tilde{a}$ and $c = \tilde{c}$ almost everywhere by the Riemann--Lebesgue lemma.

Regarding the norm estimates, we have already shown $\norm{a}_\infty\le \norm{A}$. By considering the operator $AJ=M(a)J+M(c)$ instead of 
$A=M(a)+M(c)J$ and noting that it also satisfies \eqref{eq:prop_unique_decomposition}, we can repeat the arguments at the beginning of the proof. Correspondingly, we conclude that $AJ=M(\tilde{a})+M(\tilde{c})J$ with certain $\tilde{a},\tilde{c}\in L^\infty$ and $\norm{\tilde{a}}_\infty\le \norm{AJ}=\norm{A}$. The uniqueness of the representation entails that 
$c=\tilde{a}$, $a=\tilde{c}$, and thus $\norm{c}_\infty\le \norm{A}$. The upper norm estimates are obvious.
\end{proof}

\begin{rem}
It is straightforward to verify that, conversely, an operator $A=M(a)+M(c)J$ with $a,c\in L^\infty$ satisfies condition   \eqref{eq:prop_unique_decomposition}.  The norm of such an operator can even be evaluated exactly.
This uses the decomposition of $L^p$ into the direct sum $L^p(\T_+)\dotplus J L^p(\T_+)$, where $\T_+=\{z\in\T\,:\, \mathrm{Im}(z)>0\}$.
The operator can then be identified with a block multiplication operator with  $2\times 2$ matrix-valued symbol.
Indeed,
$$
\norm{A}_{B(L^p)}=  \esssup_{z\in\T_+} \norm{\left(\begin{array}{cc} a(z) & c(z) \\ c(\bar{z}) & a(\bar{z}) \end{array}\right)}_{p}
$$
where $\norm{\cdot}_p$ stands for the  $p$-norm of a matrix. We leave the detailed verification 
 to the reader. 
 \hfill $\Box$
\end{rem}

In the following we will consider sesquilinear forms $(\cdot,\cdot) \from \Dc_+ \times \Dc_+ \to \C$ with domain 
\[
	\Dc_+ := \spann\set{\chi_j : j \in \N_0}.
\]
Every densely defined operator $A \from H^p \to H^p$ with domain $\Dc(A) \supset \Dc_+$ induces a form via $(\cdot,\cdot)_A := \sp{A\cdot}{\cdot}$. To keep the notation slick, we will identify $A$ with $(\cdot,\cdot)_A$ if $(\cdot,\cdot)_A$ is the form induced by $A$. Conversely, if $(\cdot,\cdot)_A$ is a form, we will write $\sp{A\cdot}{\cdot} := (\cdot,\cdot)_A$ even if 
the form is not induced by an operator $A$.
Addition of forms and right multiplication with operators that leave $\Dc_+$ invariant are defined in the obvious way.

A form $T$ is called a {\em Toeplitz form} or {\em form of Toeplitz type} if $\sp{T\chi_k}{\chi_j} = \sp{T\chi_{k+1}}{\chi_{j+1}}$ for all $j,k \in \N_0$. A form $H$ is called a {\em Hankel form} or {\em form of Hankel type} if $\sp{H\chi_{k+1}}{\chi_j} = \sp{H\chi_k}{\chi_{j+1}}$ for all $j,k \in \N_0$. 
If there exists a function  $a\in L^1$ such that for $j,k\in\N_0$
\[
\sp{T\chi_k}{\chi_j} =\sp{a}{\chi_{j-k}}\quad\mbox{ or }\quad
\sp{H\chi_k}{\chi_j}=\sp{a}{\chi_{1+j+k}},
\]
respectively, then we say that the Toeplitz or Hankel form is induced by $a\in L^1$ and
we will write $T=T(a)$ and $H=H(a)$.
If a form is both a Toeplitz and a Hankel form we call it a {\em checkerboard form}. Clearly, the only bounded checkerboard form is $0$.

\begin{thm} \label{thm:Toeplitz+Hankel_bounded_Hardy}
For $1<p<\infty$, let $T$ and $H$ be forms of Toeplitz and Hankel type, respectively, and assume that $A = T + H$ is a bounded operator on $H^p$. 
Then \[T = T_1 + C\quad\mbox{and}\quad H=H_1-C,\]
where $T_1$ is a bounded Toeplitz operator, $H_1$ is a bounded Hankel operator, and $C$ is a checkerboard form.
Both decompositions are unique.

Furthermore, $T_1=T(a)$ and $H_1=H(b)$ for some $a,b\in L^\infty$, where the functions $a$ and $P(\chi_{-1}b)$ are uniquely determined.
\end{thm}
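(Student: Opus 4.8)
The plan is to imitate the sequence-space argument behind Theorem~\ref{thm:Toeplitz+Hankel_bounded}: transport the form $A=T+H$ on $H^p$ to a genuine bounded operator on $L^p$ of multiplication-plus-flip type and then invoke Proposition~\ref{prop:unique_decomposition}. I would first record that a Toeplitz form and a Hankel form each satisfy the displacement identity \eqref{eq:prop_unique_decomposition}: writing $\sp{T\chi_k}{\chi_j}=t_{j-k}$ and $\sp{H\chi_k}{\chi_j}=s_{j+k}$, the relevant four-term combination telescopes to $0$ in both cases, so $A$ satisfies \eqref{eq:prop_unique_decomposition} for all $j,k\in\N$ (the restriction $j,k\ge1$ being forced by $\chi_{-1}\notin\Dc_+$). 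Since $A$ is bounded, $\abs{\sp{A\chi_k}{\chi_j}}\le\norm{A}$, so all form entries are uniformly bounded.

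For the lift, set $U=M(\chi_1)$, let $P\from L^p\to H^p$ be the Riesz projection (bounded precisely because $1<p<\infty$) and $\iota\from H^p\hookrightarrow L^p$ the inclusion, and put $B_n:=U^{-n}\iota A P U^n$ on $L^p$. As $U^{\pm n}$ are isometries, $\norm{B_n}\le\norm{P}\norm{A}=:M$, and for $n$ large relative to fixed $j,k$ one computes $\sp{B_n\chi_k}{\chi_j}=\sp{A\chi_{k+n}}{\chi_{j+n}}$. Passing to a subsequence $(n_l)$ along which the countably many entries all converge (diagonal extraction using the bound $\norm{A}$), I would define a form on trigonometric polynomials by $\sp{Bf}{g}:=\lim_l\sp{B_{n_l}f}{g}$; the estimate $\abs{\sp{B_{n_l}f}{g}}\le M\norm{f}_p\norm{g}_q$ survives, so by density of trigonometric polynomials and reflexivity ($(L^q)^*=L^p$) the form extends to a bounded operator $B$ on $L^p$ — the $L^p$-analogue of Lemma~\ref{lem:elementwise_convergence_bounded} — with $\sp{B\chi_k}{\chi_j}=L_{j,k}:=\lim_l\sp{A\chi_{k+n_l}}{\chi_{j+n_l}}$. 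Shifting indices in the displacement identity shows $B$ satisfies \eqref{eq:prop_unique_decomposition} on all of $\Z$, so Proposition~\ref{prop:unique_decomposition} gives $B=M(a)+M(c)J$ with unique $a,c\in L^\infty$ and hence $L_{j,k}=\hat a_{j-k}+\hat c_{j+k+1}$.

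To extract the decomposition, I would use that $T$ is Toeplitz and $H$ Hankel to write $\sp{A\chi_{k+n_l}}{\chi_{j+n_l}}=t_{j-k}+s_{j+k+2n_l}$, so $L_{j,k}=t_{j-k}+\sigma_{j+k}$ where $\sigma_m:=\lim_l s_{m+2n_l}$ exists and depends only on $j+k$. Comparing gives $\hat a_{j-k}-t_{j-k}=\sigma_{j+k}-\hat c_{j+k+1}$; the left side depends only on $j-k$ and the right only on $j+k$, and since these range independently within each parity class, both sides are constant on residues mod~$2$, say $\alpha_{(j-k)\bmod2}$. The entries $\alpha_{(j-k)\bmod2}$ define a form that is simultaneously Toeplitz and Hankel, i.e.\ a checkerboard form $C$, and $t_{j-k}=\hat a_{j-k}-\alpha_{(j-k)\bmod2}$ reads $T=T(a)-C$; setting $T_1:=T(a)$ (bounded since $a\in L^\infty$) and absorbing the sign into $C$ yields $T=T_1+C$. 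Then $H_1:=A-T_1=H+C$ is at once a bounded operator and a Hankel form, hence a bounded Hankel operator with $H=H_1-C$, and by the Hardy-space Nehari theorem (valid for $1<p<\infty$; see the introduction and~\cite{BoeSi2006}) there is $b\in L^\infty$ with $H_1=H(b)$.

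For uniqueness, any two such decompositions differ by a checkerboard form equal to a difference of bounded Toeplitz (resp.\ Hankel) operators, hence bounded, hence $0$; this pins down $T_1$, $C$ and $H_1$. Then $a$ is unique by uniqueness of the Toeplitz symbol, and since $H(b)$ depends only on $\set{\hat b_n:n\ge1}$, which are exactly the Fourier coefficients of $P(\chi_{-1}b)$, the function $P(\chi_{-1}b)$ is determined (while $b$ itself is not). The step I expect to be the main obstacle is the limiting construction in the second paragraph — manufacturing from the shifted forms an honest bounded operator on $L^p$ carrying the prescribed entries; this is exactly where boundedness of the Riesz projection and reflexivity of $L^p$ enter, and it replaces Lemma~\ref{lem:elementwise_convergence_bounded}. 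The appeal to Nehari's theorem to realise the bounded Hankel form by an $L^\infty$ symbol is the other essential, though off-the-shelf, ingredient.
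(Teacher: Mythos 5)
Your proposal is correct and follows essentially the same route as the paper: compress the shifted operators $U^{-n}\iota APU^{n}$ to a weak limit $B$ on $L^p$, apply Proposition~\ref{prop:unique_decomposition} to get $B=M(a)+M(c)J$, compare the diagonal-dependent and antidiagonal-dependent parts to identify $T-T(a)$ as a checkerboard form, and finish with Nehari's theorem and the fact that a bounded checkerboard form vanishes. The only differences are expository: you spell out the ``standard argument'' for the weak limit (via the bilinear-form bound and $(L^q)^*=L^p$) and replace the paper's shift-invariance argument for the checkerboard part by an equivalent explicit parity-constancy computation.
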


\begin{proof}
Let $P \from L^p \to H^p$ be the Riesz projection and $P^* \from H^p\to L^p$ be the inclusion. 
As the operators $(U^{-n}P^*APU^n)_{n\in\N}$ have a uniformly bounded norm, a standard argument shows that one can choose an increasing sequence $(n_l)_{l \in \N}$ such that the sequence $(\langle U^{-n_l}P^*APU^{n_l}\chi_k, \chi_j\rangle)_{l\in \N}$ converges for every  $j,k\in\Z$. As a consequence, the sequence $(U^{-n_l}P^*APU^{n_l})_{l \in \N}$ converges weakly to some bounded linear operator $B :L^p \to L^p$. 

For $j,k \in \N_0$ let $t_{j-k} := \sp{T\chi_k}{\chi_j}$ and $h_{j+k+1} := \sp{H\chi_k}{\chi_j}$. Then
\[
\lim\limits_{l \to \infty} \sp{U^{-n_l}P^*TPU^{n_l}\chi_k}{\chi_j} = \lim\limits_{l \to \infty} \sp{T\chi_{k+n_l}}{\chi_{j+n_l}} = t_{j-k}
\]
for all $j,k \in \Z$. It follows that the limits $\lim\limits_{l \to \infty} \sp{U^{-n_l}P^*HPU^{n_l}\chi_k}{\chi_j}$ also exist and
\[
\lim\limits_{l \to \infty} \sp{U^{-n_l} P^*HPU^{n_l}\chi_k}{\chi_j} =  \lim\limits_{l \to \infty} \sp{H\chi_{k+n_l}}{\chi_{j+n_l}} = \lim\limits_{l \to \infty} h_{j+k+2n_l+1}
\! =: \!\tilde{h}_{j+k}\]
for all $j,k \in \Z$. Combining the previous two equations, we get 
\[
\sp{B\chi_k}{\chi_j}=t_{j-k}+\tilde{h}_{j+k},
\]
and therefore $B$ satisfies \eqref{eq:prop_unique_decomposition}. Hence, by Proposition \ref{prop:unique_decomposition}, 
$B = M(a)+M(c)J$ for some $a,c \in L^{\infty}$. In particular, for $j,k \in \N_0$, 
\[
t_{j-k} - \sp{T(a)\chi_k}{\chi_j} 
= t_{j-k} - \sp{M(a)\chi_k}{\chi_j} = \sp{M(c)J\chi_k}{\chi_j} - \tilde{h}_{j+k}. 
\]
The left-hand side is invariant under $j \mapsto j+1$, $k \mapsto k+1$. The right-hand side is invariant under $j \mapsto j-1$, $k \mapsto k+1$. It follows that $C:=T-T(a)$ is of checkerboard type and $T_1 := T(a)$ is bounded. 
Consequently, $H_1:=H+C=H+T-T_1   =A-T_1$ is a Hankel form and bounded on $H^p$. This yields the desired decompositions.
 The uniqueness follows from the fact that a checkerboard form is necessarily unbounded or zero.

Finally, the characterization of bounded Hankel operators  \cite[Theorem 2.11]{BoeSi2006} implies that
$H_1=H(b)$ with some (non-unique) $b\in L^\infty$, and we know already $T_1=T(a)$. Clearly, the functions $a$ and $P(\chi_{-1}b)$ are uniquely determined as their Fourier coefficients are given by $T_1$ and $H_1$.
\end{proof}

The reasoning in the previous proof breaks down in the case $p=1$. Indeed, as $P$ is not bounded on $H^1$, we cannot conclude $a \in L^{\infty}$ as easily in the first part and even if we had $a \in L^{\infty}$, the corresponding Toeplitz operator $T(a)$ needs not be bounded on $H^1$. As mentioned at the beginning of this section, we conjecture that for $p=1$ the boundedness of $T(a)$ can be shown by different means.

\begin{thm} \label{thm:Toeplitz+Hankel_compact_Hardy_1}
For $1<p<\infty$, consider the  bounded linear operator  $A=T(a)+H(b)$ on $H^p$ with $a,b\in L^\infty$. Then, for any compact operator $K$,
\[
\max\{\norm{a}_\infty,\frac{1}{1+c_p}\norm{H(b)+K}\}   \le\norm{A+K} \le c_p\norm{a}_\infty+\|H(b)+K\|
\]
where $c_p=\norm{P}_{B(L^p)}$.
\end{thm}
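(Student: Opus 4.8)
The plan is to imitate the proof of Theorem~\ref{thm:Toeplitz+Hankel_norm}, replacing the elementwise matrix limits by limits of the sesquilinear form of $A+K$ evaluated on translated analytic polynomials. The target is the clean lower bound $\norm{a}_\infty\le\norm{A+K}$ (with no factor $c_p$), from which the Hankel lower bound and the upper bound will both follow by the triangle inequality together with \eqref{e:Toeplitz norm}.

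First I would isolate the Toeplitz symbol. Fix $\tilde f,\tilde g\in\Dc_+$ and track $\sp{(A+K)U^{n}\tilde f}{U^{n}\tilde g}$ as $n\to\infty$, where the three contributions behave as follows. Since $\sp{T(a)f}{g}=\sp{af}{g}$ for $f\in H^p$, $g\in H^q$, one has $\sp{T(a)U^{n}\tilde f}{U^{n}\tilde g}=\sp{a\chi_n\tilde f}{\chi_n\tilde g}=\sp{a\tilde f}{\tilde g}$, independent of $n$. Since $\sp{H(b)\chi_{k+n}}{\chi_{j+n}}=\hat b_{j+k+2n+1}$ and $\hat b\in c_0(\Z)$, the Hankel contribution is a finite sum of terms tending to $0$ and thus vanishes in the limit. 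Finally $U^{n}\tilde f\to 0$ weakly in $H^p$ (as $\sp{U^n\tilde f}{h}$ is a finite sum of Fourier coefficients $\overline{\hat h_{n+k}}\to0$ for $h\in H^q$), so compactness of $K$ gives $\norm{KU^{n}\tilde f}_p\to0$ and hence $\sp{KU^{n}\tilde f}{U^{n}\tilde g}\to0$. Combining these, $\sp{(A+K)U^{n}\tilde f}{U^{n}\tilde g}\to\sp{a\tilde f}{\tilde g}$, while $\abs{\sp{(A+K)U^{n}\tilde f}{U^{n}\tilde g}}\le\norm{A+K}\,\norm{\tilde f}_p\norm{\tilde g}_q$ because $U^n$ is isometric. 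Passing to the limit yields
\[
\abs{\sp{a\tilde f}{\tilde g}}\le\norm{A+K}\,\norm{\tilde f}_p\norm{\tilde g}_q\qquad(\tilde f,\tilde g\in\Dc_+).
\]

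Next I would upgrade this to the full multiplier norm. For arbitrary trigonometric polynomials $u,v$ I pick $m$ large enough that $\chi_m u,\chi_m v\in\Dc_+$; then $\sp{a\chi_m u}{\chi_m v}=\sp{au}{v}$, $\norm{\chi_m u}_p=\norm{u}_p$ and $\norm{\chi_m v}_q=\norm{v}_q$, so the displayed inequality becomes $\abs{\sp{au}{v}}\le\norm{A+K}\,\norm{u}_p\norm{v}_q$. Taking the supremum over trigonometric polynomials, which are dense in $L^p$ and $L^q$, and using $\norm{M(a)}_{B(L^p)}=\norm{a}_\infty$ from Proposition~\ref{multiplication characterization}, I obtain $\norm{a}_\infty\le\norm{A+K}$. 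The rest is routine: by \eqref{e:Toeplitz norm} one has $\norm{T(a)}\le c_p\norm{a}_\infty$, so
\[
\norm{H(b)+K}\le\norm{A+K}+\norm{T(a)}\le\norm{A+K}+c_p\norm{a}_\infty\le(1+c_p)\norm{A+K},
\]
which is the Hankel lower bound, while $\norm{A+K}\le\norm{T(a)}+\norm{H(b)+K}\le c_p\norm{a}_\infty+\norm{H(b)+K}$ gives the upper bound.

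The step I expect to carry the real weight is the double role of the translations: the shift $U^n$ must simultaneously annihilate the Hankel part (via $\hat b\in c_0$) and the compact part (via weak convergence), and then the auxiliary shift $\chi_m$ applied to \emph{non-analytic} polynomials must recover the entire essential supremum of $a$ from test vectors constrained to $\Dc_+$. Checking that restricting to analytic test functions loses nothing—that the supremum over $\Dc_+$ still reproduces $\norm{M(a)}_{B(L^p)}$—is the one point needing care; everything else is bookkeeping with the triangle inequality. A convenience over Theorem~\ref{thm:Toeplitz+Hankel_norm} is that, since $1<p<\infty$, the space $H^p$ is reflexive and $U^n\to0$ weakly with no case distinctions, so the compact term requires no separate treatment of $p=1$ or $p=\infty$.
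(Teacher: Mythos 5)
Your proposal is correct and follows essentially the same route as the paper: the paper also conjugates by shifts, observes that the weak limit of $U^{-n_l}P^*(A+K)PU^{n_l}$ is the multiplication operator coming from the Toeplitz part (the Hankel part dying by Riemann--Lebesgue and the compact part by weak convergence of $U^n$), and tests against Laurent polynomials $f$ with $PU^{n_l}f=U^{n_l}f$ for large $n_l$ --- which is exactly your $\chi_m$-shift device for recovering $\norm{M(a)}_{B(L^p)}$ from analytic test functions --- before finishing with the same triangle-inequality bookkeeping. The only cosmetic difference is that you work directly with the sesquilinear form and identify the limit as $M(a)$ outright, whereas the paper quotes the limit $B=M(a)+M(c)J$ from Theorem~\ref{thm:Toeplitz+Hankel_bounded_Hardy} and then invokes the estimate $\norm{a}_\infty\le\norm{B}$ from Proposition~\ref{prop:unique_decomposition}.
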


We recall that the constant $c_p$ is equal to $\frac{1}{\sin(\frac{\pi}{p})}$ (see \eqref{e:c_p}).  

\begin{proof}
In the proof of Theorem \ref{thm:Toeplitz+Hankel_bounded_Hardy}  it was shown that $B=M(a)+M(c)J$ is a bounded linear operator on $L^p$, which is the weak limit of
the sequence of operators $(U^{-n_l}P^*APU^{n_l})_{l \in \N}$.
As $(U^n)_{n \in \N}$ converges weakly to $0$ and $K$ is compact, $(KPU^n)_{n \in \N}$ converges strongly to $0$. It follows that $(U^{-n}P^* KPU^n)_{n \in \N}$ converges strongly to $0$ as well. Therefore, $B$ is the weak limit of the sequence 
 $(U^{-n_l}P^*(A+K)PU^{n_l})_{l \in \N}$, and this implies that for each Laurent polynomial $f$,
\begin{align*}
\norm{Bf}_{L^p} &\le \liminf_{l\to\infty} \norm{U^{-n_l}P^*(A+K)PU^{n_l}f}_{L^p}\\
&\le  \norm{A+K}_{B(H^p)} \liminf_{l\to\infty} \norm{PU^{n_l}f}_{H^p} =\norm{A+K}_{B(H^p)}\norm{f}_{L^p}.
 \end{align*}
We deduce that $\norm{B}\le \norm{A+K}$, and combining this with the norm estimate $\norm{a}_{\infty} \le \norm{B}$ 
established in Proposition \ref{prop:unique_decomposition} we obtain
 $\norm{a}_{\infty} \le  \norm{A+K}$, which is part of the lower estimate. For the upper estimate notice that $\norm{T(a)}\le c_p\norm{a}_{\infty}$.
Furthermore,
\begin{align*}
\norm{H(b)+K}\le \norm{A+K}+\norm{T(a)}\le (1+c_p)\norm{A+K},
\end{align*}
which completes the lower estimate.
\end{proof}

\begin{cor}\label{cor:Toeplitz+Hankel_compact_Hardy_2}
For $1<p<\infty$, let $T$ and $H$ be forms of Toeplitz and Hankel type, respectively, and assume that $A = T + H$ is a compact operator on $H^p$. Then $T=C$ is a checkerboard form and $H = H(b)-C$, where $H(b)$ is a compact Hankel with $b\in C(\T)$.
\end{cor}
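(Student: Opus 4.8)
The plan is to combine the two preceding results, mirroring the proof of the corollary following Theorem~\ref{thm:Toeplitz+Hankel_norm} in the sequence-space setting. Since a compact operator is in particular bounded, I would first apply Theorem~\ref{thm:Toeplitz+Hankel_bounded_Hardy} to the bounded operator $A = T + H$. This yields $T = T(a) + C$ and $H = H(b) - C$ with $a, b \in L^\infty$, a checkerboard form $C$, and both $T(a)$ and $H(b)$ bounded on $H^p$. The two checkerboard terms cancel, so $A = T(a) + H(b)$, which is exactly the form required by Theorem~\ref{thm:Toeplitz+Hankel_compact_Hardy_1}.

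Next I would invoke the lower estimate of Theorem~\ref{thm:Toeplitz+Hankel_compact_Hardy_1} with the specific choice $K = -A$; this is admissible precisely because $A$ is compact. Then $\norm{A+K} = 0$, and the inequality $\max\{\norm{a}_\infty, \tfrac{1}{1+c_p}\norm{H(b)+K}\} \le \norm{A+K} = 0$ forces both $\norm{a}_\infty = 0$ and $\norm{H(b) - A} = 0$. Hence $a = 0$, so that $T(a) = 0$ and $T = C$ is a checkerboard form, while $H(b) = A$ is compact, giving $H = H(b) - C$ as claimed.

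Finally, to promote the symbol to a continuous one, I would appeal to the characterization of compact Hankel operators on $H^p$ quoted in the introduction (Hartman's theorem for $1<p<\infty$): compactness of $H(b)$ entails the existence of a continuous function whose positive Fourier coefficients agree with those of $b$. Since $H(b)$ depends only on $\hat b_n$ for $n \ge 1$, replacing $b$ by this continuous representative does not change the operator, so one may take $b \in C(\T)$.

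I do not expect a genuine obstacle here: the statement is a formal consequence of the decomposition theorem and the norm estimate, the only real idea being the collapse of the left-hand side of the estimate under the choice $K = -A$. The single point deserving care is the last step, since Theorem~\ref{thm:Toeplitz+Hankel_bounded_Hardy} delivers only $b \in L^\infty$ (indeed only $P(\chi_{-1} b)$ is uniquely determined); one must explicitly use the Hankel compactness characterization to extract a continuous symbol and observe that this leaves $H(b)$ unaltered.
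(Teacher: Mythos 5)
Your proposal is correct and follows essentially the same route as the paper's own proof: apply Theorem~\ref{thm:Toeplitz+Hankel_bounded_Hardy} to get $A=T(a)+H(b)$, then use Theorem~\ref{thm:Toeplitz+Hankel_compact_Hardy_1} with $K=-A$ to conclude $a=0$ and $H(b)$ compact, and finally invoke the characterization of compact Hankel operators to obtain $b\in C(\T)$. Your closing remark about only $P(\chi_{-1}b)$ being determined, so that passing to the continuous representative leaves $H(b)$ unchanged, is a correct and worthwhile clarification of the paper's terser final sentence.
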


\begin{proof}
By Theorem \ref{thm:Toeplitz+Hankel_bounded_Hardy} we can write $T=T(a)+C$ and $H=H(b)-C$
with $a,b\in L^\infty$ and $C$ a checkerboard form.  Thus $A=T(a)+H(b)$, and Theorem \ref{thm:Toeplitz+Hankel_compact_Hardy_1} with $K=-A$ implies that $a=0$ and that $H(b)=-K$ is compact. The characterization of compact Hankel operators \cite[Theorem 2.54]{BoeSi2006} completes the proof.
\end{proof}

\begin{cor} \label{cor:Toeplitz+Hankel_bounded_cmp_Hardy}
For $1<p<\infty$, let $A = T(a) + H$ where $a \in L^1$ and $H$ is a Hankel form. 
\begin{itemize}
\item[(i)]
If $A$ is bounded on $H^p$, then $a \in L^{\infty}$ and $H = H(b)$ for some $b \in L^{\infty}$.
\item[(ii)]
If $A$ is compact on $H^p$, then $a =0$ and $H = H(b)$ for some $b \in C(\T)$.
\end{itemize}
\end{cor}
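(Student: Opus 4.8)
The plan is to deduce both parts directly from Theorem~\ref{thm:Toeplitz+Hankel_bounded_Hardy} and Corollary~\ref{cor:Toeplitz+Hankel_compact_Hardy_2}, the only genuinely new ingredient being a Riemann--Lebesgue argument that forces the auxiliary checkerboard form to vanish once its Toeplitz symbol is known to lie in $L^1$. For part~(i), I would apply Theorem~\ref{thm:Toeplitz+Hankel_bounded_Hardy} to the bounded operator $A = T(a) + H$, regarding $T := T(a)$ as a form of Toeplitz type and $H$ as a form of Hankel type. This yields $T(a) = T_1 + C$ and $H = H_1 - C$, where $T_1 = T(\tilde{a})$ is a bounded Toeplitz operator with $\tilde{a} \in L^\infty$, $H_1 = H(b)$ is a bounded Hankel operator with $b \in L^\infty$, and $C$ is a checkerboard form. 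The crux is to show $C = 0$. Since $C = T(a) - T(\tilde{a})$, its matrix entries are $C_{j,k} = \hat{a}_{j-k} - \hat{\tilde{a}}_{j-k}$, so $C$ is of Toeplitz type with generating sequence $(\hat{a}_n - \hat{\tilde{a}}_n)_{n \in \Z}$; being simultaneously of Hankel type then forces this sequence to be constant on the even indices and constant on the odd indices. As $a, \tilde{a} \in L^1$, the Riemann--Lebesgue lemma gives $\hat{a}_n - \hat{\tilde{a}}_n \to 0$ as $\abs{n} \to \infty$, so both constants must vanish and $\hat{a}_n = \hat{\tilde{a}}_n$ for all $n \in \Z$. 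Hence $a = \tilde{a} \in L^\infty$, $C = 0$, and $H = H_1 = H(b)$ with $b \in L^\infty$.

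For part~(ii), compactness of $A$ implies boundedness, so by part~(i) we may write $A = T(a) + H(b)$ with $a, b \in L^\infty$. Applying Corollary~\ref{cor:Toeplitz+Hankel_compact_Hardy_2} to this compact operator gives $T(a) = C'$ for some checkerboard form $C'$ and $H(b) = H(b') - C'$ with $H(b')$ compact and $b' \in C(\T)$. Repeating the Riemann--Lebesgue argument from part~(i)---now using $a \in L^\infty \subset L^1$, so that $\hat{a} \in c_0(\Z)$---shows that a Toeplitz form can be of checkerboard type only if $\hat{a} \equiv 0$, that is, $a = 0$ and $C' = 0$. Therefore $H = H(b')$ with $b' \in C(\T)$ and $H(b')$ compact, as claimed.

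I expect the main (and essentially the only) obstacle to be the checkerboard-vanishing step: one must translate the two defining symmetries of a checkerboard form into the assertion that its diagonal values are constant on each parity class, and then combine this with the decay of the Fourier coefficients of an $L^1$ function. The remaining arguments are immediate applications of the already-established decomposition and compactness results.
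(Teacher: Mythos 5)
Your argument is correct and is essentially the paper's own proof: the paper likewise reduces both parts to Theorem~\ref{thm:Toeplitz+Hankel_bounded_Hardy} and Corollary~\ref{cor:Toeplitz+Hankel_compact_Hardy_2} plus the observation that a checkerboard form of the type $T(c)$ with $c\in L^1$ must vanish by the Riemann--Lebesgue lemma. Your spelled-out version of that last step (the $2$-periodicity of the generating sequence forced by the simultaneous Toeplitz and Hankel symmetries, combined with $\hat{c}_n\to 0$) is exactly what the paper leaves implicit.
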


\begin{proof}
In view of Theorem \ref{thm:Toeplitz+Hankel_bounded_Hardy} and Corollary \ref{cor:Toeplitz+Hankel_compact_Hardy_2},
it suffices to show that if $T(c)$ with $c\in L^1$  is a checkerboard form, then $c = 0$. But this follows directly from the Riemann--Lebesgue lemma.
\end{proof}

Finally, let us summarize the previous results by establishing equivalent characterizations of bounded Toeplitz + Hankel forms.

\begin{thm}\label{thm:main2}
Let $1< p<\infty$. Then the following statements are equivalent:
\begin{itemize}
\item[{\rm (i)}]
$A$ is a bounded linear operator on $H^p$ satisfying the relations
\begin{equation*}
\sp{A\chi_{j-1}}{\chi_k} - \sp{A\chi_j}{\chi_{k-1}} - \sp{A\chi_j}{\chi_{k+1}} + \sp{A\chi_{j+1}}{\chi_k} = 0
\end{equation*}
for all $j,k \in \N$.
\item[{\rm (ii)}]
$A$ is a bounded linear operator on $H^p$ which is a sum of a Toeplitz and a Hankel form.
\item[{\rm (iii)}]
$A=T(a)+H(b)$ for some $a,b\in L^\infty$.
\end{itemize}
Furthermore, the functions $a$ and $P(\chi_{-1}b)$ are uniquely determined, and 
\begin{align*}
\norm{T(a)+H(b)}  & \simeq
\max\{ \norm{a}_{\infty},\norm{H(b)} \},
\\[1ex]
\norm{T(a)+H(b)}_{\mathrm{ess}}  &\simeq
\max\{ \norm{a}_{\infty},\norm{H(b)}_{\mathrm{ess}} \},
\end{align*}
where ``$\simeq$'' stands for equivalence of norms.
\end{thm}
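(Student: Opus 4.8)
The plan is to establish Theorem~\ref{thm:main2} by proving the cyclic chain of implications (iii)$\Rightarrow$(ii)$\Rightarrow$(i)$\Rightarrow$(ii)$\Rightarrow$(iii), leaning almost entirely on the machinery already built for Hardy spaces. The implications (iii)$\Rightarrow$(ii) and (ii)$\Rightarrow$(iii) are essentially immediate: the former because $T(a)$ and $H(b)$ with $a,b \in L^\infty$ are bounded Toeplitz and Hankel forms (using $c_p = \norm{P}_{B(L^p)} < \infty$ for boundedness of $T(a)$, and \cite[Theorem 2.11]{BoeSi2006} for $H(b)$), while the latter is exactly the content of Theorem~\ref{thm:Toeplitz+Hankel_bounded_Hardy} combined with Corollary~\ref{cor:Toeplitz+Hankel_bounded_cmp_Hardy}, which together guarantee that a bounded sum of a Toeplitz and a Hankel form can be written as $T(a)+H(b)$ with $a,b \in L^\infty$.

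The substantive work is the equivalence of the displacement relation (i) with the Toeplitz+Hankel form structure (ii). For (ii)$\Rightarrow$(i), I would simply verify by direct computation that if $A = T + H$ with $T$ of Toeplitz type and $H$ of Hankel type, then the four-term combination in (i) vanishes. Writing $\sp{A\chi_j}{\chi_k} = t_{k-j} + h_{k+j+1}$ (adjusting indices to match the inner-product convention used in the statement of (i)), the Toeplitz part contributes terms depending only on the difference of indices and the Hankel part terms depending only on the sum, and in each case the alternating sum telescopes to zero. This is a routine index-bookkeeping check, analogous to the opening observation in the proof of Proposition~\ref{prop:unique_decomposition}.

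The harder direction is (i)$\Rightarrow$(ii), and here I would mirror the combinatorial argument used in the proof of the implication (i)$\Rightarrow$(ii) in Theorem~\ref{main1}. The idea is to define a candidate Hankel form $H$ by prescribing its entries along anti-diagonals, using the diagonal and near-diagonal entries of $A$ — concretely, setting $\sp{H\chi_k}{\chi_j}$ for $j+k$ of each parity via the values $A_{m,m}$ and $A_{m,m+1}$ as in the sequence-space proof — and then show that $T := A - H$ is of Toeplitz type by verifying $\sp{T\chi_{k+1}}{\chi_{j+1}} = \sp{T\chi_k}{\chi_j}$. The key computation iterates the displacement relation along an anti-diagonal (the analogue of equation~\eqref{eq:iterated_displacement}), splitting into the cases $j+k$ even and $j+k$ odd, to cancel the Hankel correction against the increment $A_{j+1,k+1} - A_{j,k}$. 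The main obstacle is purely notational: the inner-product pairing in the Hardy-space setting places the operator on $\chi_j$ and conjugates $\chi_k$, so the roles of rows and columns are transposed relative to the matrix indexing of Section~2, and I must track this carefully so that "Toeplitz" and "Hankel" in the form sense align with the displacement pattern in (i). Once (ii) is established, the final uniqueness assertion for $a$ and $P(\chi_{-1}b)$ and the two norm equivalences follow directly from Theorem~\ref{thm:Toeplitz+Hankel_bounded_Hardy} (uniqueness) and Theorem~\ref{thm:Toeplitz+Hankel_compact_Hardy_1} (the norm and essential-norm estimates, the latter applied with the infimum over compact $K$).
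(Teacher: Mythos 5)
Your proposal is correct and follows essentially the same route as the paper, whose proof of Theorem~\ref{thm:main2} is simply the statement that it is ``similar to the proof of Theorem~\ref{main1}'': the easy implications plus the anti-diagonal construction of the Hankel form for (i)$\Rightarrow$(ii), Theorem~\ref{thm:Toeplitz+Hankel_bounded_Hardy} for (ii)$\Rightarrow$(iii) and uniqueness, and Theorem~\ref{thm:Toeplitz+Hankel_compact_Hardy_1} for the norm and essential-norm equivalences. Your handling of the index transposition and of the infimum over compact $K$ is exactly what is needed.
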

\begin{proof}
Similar to the proof of Theorem~\ref{main1}.
\end{proof}

Elaborating on the last part of the previous theorem, we remark that the norm and the essential norm of a Hankel operator $H(b)$ on $H^p$ is equivalent to
\[
\dist_{L^\infty}(b,\overline{H^\infty}) \quad\mbox{ and }\quad
\dist_{L^\infty}(b,C(\T)+\overline{H^\infty}),
\]
respectively. For details see \cite[Theorems~2.11 and 2.54]{BoeSi2006} and the comments and references given there.

\subsection*{Acknowledgements}
This project has received funding from the European Union's Horizon 2020 research and innovation programme under the Marie Sklodowska-Curie grant agreement No 844451. Ehrhardt was supported in part by the Simons Foundation Collaboration Grant \# 525111. Virtanen was supported in part by EPSRC grant EP/T008636/1. The authors also thank the American Institute of Mathematics and the SQuaRE program for their support.

\normalsize

\end{document}